\numberwithin{equation}{section}
\def\pt{{\sqrt{-1}\partial_t\bar\partial_t}}
\newcommand{\pd}[2]{\frac {\partial #1}{\partial #2}}
\newcommand{\al}{\alpha}
\newcommand{\bb}{\beta}
\newcommand{\oo}{\omega}
\newcommand{\Na}{\nabla}
\newcommand{\ee}{\epsilon}
\newcommand{\Te}{\Theta}
\newcommand{\te}{\theta}
\newcommand{\beq}{\begin{equation}}
\newcommand{\eeq}{\end{equation}}
\newcommand{\beqs}{\begin{eqnarray*}}
\newcommand{\eeqs}{\end{eqnarray*}}
\newcommand{\beqn}{\begin{eqnarray}}
\newcommand{\eeqn}{\end{eqnarray}}
\newcommand{\beqa}{\begin{array}}
\newcommand{\eeqa}{\end{array}}
\def\td{\tilde}
\def\p{\partial}
\def\s{{\bf{s}}}
\def\RR{{\mathbb R}}
\def\PP{{\mathbb P}}
\def\CC{{\mathbb C}}
\def\LL{{\mathbb L}}
\def\GG{{\mathbb G}}
\def\ri{\rightarrow}
\def\un{\underline}
\def\pbp{\sqrt{-1}\partial\bar\partial}
\def\tr{{\rm tr}}
\def\ba{{\mathbf a}}
\def\cB{{\mathcal B}}
\def\cH{{\mathcal H}}
\def\cL{{\mathcal L}}
\def\cP{{\mathcal P}}
\def\cU{{\mathcal U}}
\def\i{{\sqrt{-1}}}
\def\Aut{{\rm Aut}}
\def\ba{\Xi}
\newtheorem{prop}{Proposition}[section]
\newtheorem{theo}[prop]{Theorem}
\newtheorem{lem}[prop]{Lemma}
\newtheorem{cor}[prop]{Corollary}
\newtheorem{rem}[prop]{Remark}
\newtheorem{ex}[prop]{Example}
\title{ Complex deformation of critical K\"ahler metrics}
\author{Haozhao Li\footnote{Research supported in part by National Science Foundation
of China No. 11001080 and No. 11131007.}}
\begin{document}
\bibliographystyle{plain}

\date{}

\maketitle

\tableofcontents

\section{Introduction}
In \cite{[Calabi1]}\cite{[Calabi2]}, Calabi introduced the extremal
K\"ahler metrics, which is the citical point of the $L^2$ norm of
the scalar curvature in the K\"ahler class. The existence and
uniqueness of the extremal K\"ahler metrics have been intensively
studied during past decades(cf. \cite{[APS]}\cite{[CT]} and
reference therein). By Kodaira-Spencer's work \cite{[KS]}, every
K\"ahler manifold admits K\"ahler metrics under small perturbation
of the complex structure. A natural question is whether
K\"ahler-Einstein metrics or extremal K\"ahler metrics still exist
when the complex structures varies. In \cite{[Koiso]}, Koiso showed
that the K\"ahler-Einstein metrics can be perturbed under the
complex deformation of the complex structure when the first Chern
class is zero or negative. When the first Chern class is positive,
Koiso showed this result if the manifold has no nontrivial
holomorphic vector fields. In \cite{[LB1]}\cite{[LB2]},
Lebrun-Simanca systematically studied the deformation theory of
extremal K\"ahler metrics and constant scalar curvature metrics and
they proved that on a K\"ahler manifold, the set of K\"ahler classes
which admits extremal metrics is open and the constant scalar
curvature metrics can be perturbed under some extra  restrictions.
Based on Lebrun-Simanca's results, Apostolov-Calderbank-Gauduchon-T.
Friedman \cite{[ACGT]}, Rollin-Simanca-Tipler
\cite{[RST]}\cite{[RT2]} further discussed extremal metrics under
the deformation of complex structures.

The main goal of this paper is to give an alternative proof on the
deformation of constant scalar curvature metrics, which was
discussed by \cite{[LB1]} in the case of fixed complex structure,
and later by \cite{[ACGT]}\cite{[RST]} in the case of varying
complex structures. Here we use the method of Pacard-Xu in
\cite{[PX]} in the context of constant mean curvature problems,
which is quite different from \cite{[LB1]} in analysis.  We will
also discuss the deformation of K\"ahler-Ricci solitons.\\

First we consider the case of fixed complex structure.  The main
difficulty of the deformation problems of the K\"ahler-Einstein
metrics or constant scalar curvature metrics is that the linearized
equation has nontrivial kernel so that we cannot use the implicit
function theorem directly. For this reason, Koiso in \cite{[Koiso]}
assumed that the manifold has no nontrivial holomorphic vector
fields, and Lebrun-Simanca in \cite{[LB1]} used the surjective
version of the implicit function theorem so that the nondegeneracy
of the Futaki invariant must be assumed. The same difficulty appears
in some other geometrical equations such as the constant mean
curvature equation. In \cite{[PX]}, Pacard-Xu constructed a new
functional to solve the constant mean curvature equation and they
removed the nondegeneracy condition of Ye's result in \cite{[Ye]}.
We observe that Pacard-Xu's method can be applied in our situation
and we have the result:

\begin{theo}\label{theo001}

Let $(M, \oo_g)$ be a compact K\"ahler manifold with a constant
scalar curvature metric $\oo_g.$ There exists $\ee_0>0$ and a smooth
function
$$\Phi: (0, \ee_0)\times \cH^{1, 1}(M)\ri \RR$$
such that if $\bb\in \cH^{1, 1}(M)$ has unit norm and  satisfies
$\Phi(t,
  \bb)=0$ for some $t\in (0, \ee_0)$ then $M$ admits a constant scalar curvature metric
   in the K\"ahler class $[\oo_g+t\bb].$ Moreover,
\begin{enumerate}
  \item[(1)] If $\bb\in \cH^{1, 1}(M)$ is traceless,  $\Phi$ has the expansion:
  $$\Phi(t, \bb)=t^2\int_M\;(\Pi_g(R_{i\bar j}\bb_{j\bar i}))^2\,\oo_g^n+O(t^3).$$
  \item[(2)] If $\bb\in \cH^{1, 1}(M)$ is traceless and $\oo_g$ is a K\"ahler-Einstein metric,
then
  $\Phi$ has the expansion:
  $$\Phi(t, \bb)=t^4\int_M\;(\Pi_g(\bb_{i\bar j}\bb_{j\bar
  i}))^2\,\oo_g^n+O(t^5).$$
\end{enumerate} Here the operator $\Pi_g$ is the projection to the
space of Killing potentials with respect to $\oo_g.$

  \end{theo}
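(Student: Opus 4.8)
\noindent\emph{Proof strategy.} The plan is a Lyapunov--Schmidt reduction in the spirit of Pacard--Xu \cite{[PX]}: solve the constant scalar curvature equation up to the finite--dimensional obstruction coming from $\ker\cL_g$, where $\cL_g=\cD^*\cD$ is the Lichnerowicz operator of $\oo_g$, and then repackage the residual obstruction as the scalar $\Phi$. For $\bb\in\cH^{1,1}(M)$ of unit norm put $\oo_t:=\oo_g+t\bb$; openness of the K\"ahler cone gives $\ee_0>0$, uniform in $\bb$, so that $\oo_t$ is K\"ahler and $[\oo_t]=[\oo_g]+t[\bb]$ for $|t|<\ee_0$. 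Any K\"ahler form in $[\oo_t]$ is $\oo_{t,\phi}:=\oo_t+\pbp\phi$, and the equation to solve is $S(\oo_{t,\phi})=\hat S_t$, where $\hat S_t$ is the average of $S$ over $(M,\oo_{t,\phi})$, a topological invariant of $[\oo_t]$ (hence $\phi$--independent) depending smoothly on $t$. Let $\cK\subset C^\infty(M)$ be the space of holomorphy potentials of $\oo_g$ orthogonal to the constants -- the ``Killing potentials'' of the statement -- so that $\ker\cL_g=\RR\oplus\cK$; since $\cL_g$ is self--adjoint, elliptic of order four and nonnegative, it restricts to an isomorphism from $(\RR\oplus\cK)^\perp\cap C^{4,\al}$ onto $(\RR\oplus\cK)^\perp\cap C^{0,\al}$.

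\noindent Next I would apply the implicit function theorem to
\[
G(t,\bb,\phi):=(I-\Pi)\bigl(S(\oo_t+\pbp\phi)-\hat S_t\bigr)=0,\qquad \phi\in(\RR\oplus\cK)^\perp\cap C^{4,\al},
\]
with $\Pi$ the $L^2(\oo_g)$--orthogonal projection onto $\RR\oplus\cK$. Since $G(0,\bb,0)=0$ and $D_\phi G(0,\bb,0)=-\cL_g$ is the isomorphism above, there is a unique small solution $\phi(t,\bb)$, real--analytic in the finitely many parameters $(t,\bb)$, with $\phi(0,\bb)=0$. Set $\la(t,\bb):=\Pi\bigl(S(\oo_{t,\phi(t,\bb)})-\hat S_t\bigr)\in\RR\oplus\cK$ and define $\Phi(t,\bb):=\|\la(t,\bb)\|_{L^2(\oo_g)}^2$ (possibly after a harmless positive rescaling, tacitly absorbed). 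By construction $S(\oo_{t,\phi(t,\bb)})-\hat S_t=\la(t,\bb)$, so $\Phi(t,\bb)=0$ forces $S(\oo_{t,\phi(t,\bb)})$ to equal the constant $\hat S_t$, which (after the usual elliptic bootstrap to $C^\infty$) is the asserted cscK metric in $[\oo_g+t\bb]$. Equivalently, $\la$ is the Futaki invariant of the class $[\oo_t]$ read through the $L^2$ metric, and $\Phi=0$ iff that invariant vanishes.

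\noindent For the expansions, differentiate at $t=0$. As $\phi(0,\bb)=0$ we have $\la(0,\bb)=0$ and $\dot\la=\Pi\bigl(DS_g(\bb+\pbp\dot\phi)-\dot{\hat S}\bigr)$; using $DS_g(\pbp\psi)=-\cL_g\psi$ on the cscK background (so its $\Pi$--part is zero), the linearised scalar curvature formula $DS_g(\bb)=-R_{i\bar j}\bb_{j\bar i}-\DD_g(\La_g\bb)$, which for traceless $\bb$ is just $-R_{i\bar j}\bb_{j\bar i}$, and $\dot{\hat S}=V^{-1}\int_M DS_g(\bb)\,\oo_g^n$ (obtained by differentiating the normalisation $\int_M(S(\oo_{t,\phi})-\hat S_t)\,\oo_{t,\phi}^n=0$, which cancels the constant components), one gets $\dot\la=-\Pi_g(R_{i\bar j}\bb_{j\bar i})$, whence $\Phi(t,\bb)=t^2\int_M(\Pi_g(R_{i\bar j}\bb_{j\bar i}))^2\,\oo_g^n+O(t^3)$; this is (1). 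When $\oo_g$ is K\"ahler--Einstein and $\bb$ is traceless, $R_{i\bar j}\bb_{j\bar i}=\tfrac{\hat S}{n}\La_g\bb=0$, so $\dot\la=0$ and -- the right--hand side of the linear equation defining $\dot\phi$ being zero -- also $\dot\phi=0$; hence $\la(t,\bb)=\tfrac{t^2}{2}\ddot\la+O(t^3)$ and $\Phi=\tfrac{t^4}{4}\|\ddot\la\|^2+O(t^5)$, the $t^3$ term vanishing automatically. A direct computation of the second variation of $S$ in the pure direction $\bb$ on a K\"ahler--Einstein background gives $D^2S_g(\bb,\bb)=\DD_g(\bb_{i\bar j}\bb_{j\bar i})+\tfrac{2\hat S}{n}\,\bb_{i\bar j}\bb_{j\bar i}$, and since $DS_g(\pbp\ddot\phi)$ is again $\Pi$--orthogonal, $\ddot\la=\Pi\bigl(D^2S_g(\bb,\bb)-\ddot{\hat S}\bigr)$; the Lichnerowicz--Matsushima identity $\DD_g\theta=-\tfrac{\hat S}{n}\theta$ for $\theta\in\cK$ on a K\"ahler--Einstein manifold converts $\Pi_g\DD_g$ into $-\tfrac{\hat S}{n}\Pi_g$ on the $\cK$--component, so $\ddot\la=\tfrac{\hat S}{n}\Pi_g(\bb_{i\bar j}\bb_{j\bar i})$ and $\Phi(t,\bb)=c\,t^4\int_M(\Pi_g(\bb_{i\bar j}\bb_{j\bar i}))^2\,\oo_g^n+O(t^5)$ for an explicit positive constant $c$; absorbing $c$ into the normalisation of $\Phi$ (and noting that $\hat S=0$ forces $\cK=0$, so both sides vanish identically) yields (2).

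\noindent The routine parts are the linear theory -- self--adjoint ellipticity of $\cL_g$ and its invertibility on $(\RR\oplus\cK)^\perp$, smoothness in the finite--dimensional parameter $\bb\in\cH^{1,1}(M)$, and the tautology $\Phi=0\Rightarrow$ cscK. The main work is the K\"ahler--Einstein expansion: one must check $\dot\phi=0$ so that the relevant variation is the genuine harmonic direction $\bb$; compute $D^2S_g(\bb,\bb)$ correctly (the sole real calculation, and the point where curvature conventions enter); keep track of the $\pbp(\La_g\bb)$ terms that drop out by tracelessness; and verify that the constant part of $\ddot\la$ is annihilated by $\ddot{\hat S}$, which again follows from differentiating the volume normalisation. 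I do not expect the first--order expansion (1) to cause any difficulty: it falls out of the linearised scalar curvature formula together with the volume constraint.
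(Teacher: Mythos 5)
Your proposal is correct and reaches the stated expansions, but it reroutes the one step where the paper's argument has real content. The paper defines $\Phi(t,\bb)$ to be the Futaki invariant of the pair $(X_{t,\bb},[\oo_t])$, where $X_{t,\bb}=\sum_k c_k(t)X_k$ is the holomorphic vector field built from the Lyapunov--Schmidt obstruction; with that definition, ``$\Phi=0\Rightarrow$ cscK'' is \emph{not} a tautology, and proving it is the content of Lemma \ref{lem003}: one compares the holomorphy potential $\te_{t,\bb}$ of $X_{t,\bb}$ taken with respect to $\oo_{t,\bb}$ against the model potential $\langle\Xi_{t,\bb},\Te\rangle$ taken with respect to $\oo_g$, establishes $\|\te_{t,\bb}-\langle\Xi_{t,\bb},\Te\rangle\|_{L^2(\oo_g)}\le C\ee_0\|\Xi_{t,\bb}\|$, and concludes that $\Phi(t,\bb)=0$ forces $\Xi_{t,\bb}=0$. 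The payoff is that $\Phi$ is manifestly a value of the Futaki invariant of $[\oo_t]$, so Corollary \ref{cor001} is immediate. You instead set $\Phi=\|\la\|_{L^2(\oo_g)}^2$ with $\la=\Pi\bigl(S(\oo_{t,\phi})-\hat S_t\bigr)$, for which ``$\Phi=0\Rightarrow$ cscK'' is genuinely tautological; this does prove the theorem as literally stated, and your expansion computations (the linearised scalar curvature $-R_{i\bar j}\bb_{j\bar i}$ for traceless harmonic $\bb$, the vanishing of $\dot\phi$ in the K\"ahler--Einstein case, the second variation in the direction $\bb$, and the identity $\DD_g\theta=-\tfrac{\hat S}{n}\theta$ on Killing potentials) line up with the paper's Lemmas \ref{lem004} and \ref{lem005}, up to the overall positive constants that both you and the paper suppress in part (2).

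The caveat is your sentence ``equivalently, $\la$ is the Futaki invariant of the class $[\oo_t]$ read through the $L^2$ metric, and $\Phi=0$ iff that invariant vanishes,'' which is asserted rather than proved. The nontrivial direction (Futaki $=0\Rightarrow\la=0$) is exactly the estimate you skipped: the Futaki pairing of $[\oo_t]$ with $X_k$ equals $\sum_i c_i(t)\int_M\te_k^t\,\te_i\,\oo_{t,\phi}^n$, where $\te_k^t$ is the potential with respect to $\oo_{t,\phi}$, and one must check that this Gram matrix is a small perturbation of $V_g\,\mathrm{Id}$ --- which is precisely what the paper's inequality (\ref{eq003}) delivers. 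Without it the theorem as stated still stands, but Corollary \ref{cor001}, the application motivating the whole construction, does not follow from your version of $\Phi$. A second, harmless divergence: the paper works throughout in $G$-invariant Sobolev spaces and projects only onto the potentials of the center $\frak z_0$, whereas you use the full space of Killing potentials; since $R_{i\bar j}\bb_{j\bar i}$ and $\bb_{i\bar j}\bb_{j\bar i}$ are $G$-invariant, the two projections agree on everything entering the expansions.
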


Theorem \ref{theo001} gives us some information in which directions
 we can find the constant scalar curvature metrics. The function $\Phi$ is
constructed by the Futaki invariant, and it is automatically zero
when the Futaki invariant vanishes. Thus, a direct corollary of
Theorem \ref{theo001} is the following result, which was proved by
Lebrun-Simanca using the deformation theory of the extremal K\"ahler
metrics and a result of Calabi in \cite{[Calabi2]}:

\begin{cor}\label{cor001}(Lebrun-Simanca \cite{[LB1]}) Let $(M, \oo_g)$ be a compact K\"ahler manifold with a
constant scalar curvature metric $\oo_g.$ For any $\bb\in \cH^{1,
1}(M)$, there is a $\ee_0>0$ such that if the Futaki invariant
vanishes on the K\"ahler class $[\oo_g+t\bb]$ for some $t\in (0,
\ee_0)$, then $M$ admits a constant scalar curvature  metric on
$[\oo_g+t\bb]$.

\end{cor}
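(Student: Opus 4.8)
The plan is to deduce Corollary \ref{cor001} directly from Theorem \ref{theo001} by analyzing the structure of the function $\Phi$. The key observation, already flagged in the discussion preceding the corollary, is that $\Phi$ is built out of the Futaki invariant: more precisely, $\Phi(t,\bb)$ vanishes identically on those $(t,\bb)$ for which the Futaki invariant of the K\"ahler class $[\oo_g+t\bb]$ is zero. So the first step is to make this dependence explicit. Writing $\mathrm{Fut}_{[\oo_g+t\bb]}$ for the Futaki functional on the class $[\oo_g+t\bb]$, viewed as a linear form on the space of holomorphic vector fields (equivalently, via the Killing potential picture, on the finite-dimensional space onto which $\Pi_g$ projects), one expects an identity of the shape $\Phi(t,\bb) = \bigl\| \mathrm{Fut}_{[\oo_g+t\bb]} \bigr\|^2$ for a suitable $t$-dependent inner product, or at the very least that $\Phi(t,\bb)=0$ whenever $\mathrm{Fut}_{[\oo_g+t\bb]}=0$. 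This is exactly the point where one invokes the construction of $\Phi$ from Pacard--Xu's functional, which is available from the proof of Theorem \ref{theo001}.

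Granting that, the proof of the corollary is then immediate. Fix $\bb \in \cH^{1,1}(M)$; we may assume $\bb \neq 0$, and after rescaling (which does not affect K\"ahler classes up to reparametrizing $t$) we may take $\|\bb\|=1$ so that Theorem \ref{theo001} applies with its constant $\ee_0>0$. Suppose $t\in(0,\ee_0)$ is such that the Futaki invariant vanishes on $[\oo_g+t\bb]$. Then by the structural identity above, $\Phi(t,\bb)=0$. Theorem \ref{theo001} now says precisely that $M$ admits a constant scalar curvature K\"ahler metric in the class $[\oo_g+t\bb]$, which is the assertion of the corollary. The rescaling to unit norm is harmless: if $\bb$ has norm $c>0$, set $\td\bb=\bb/c$ and $\td t = ct$; then $[\oo_g+t\bb]=[\oo_g+\td t\,\td\bb]$, and for $t$ in a small enough interval $\td t$ lies in $(0,\ee_0)$, and the Futaki invariant of this common class vanishes by hypothesis.

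The step I expect to be the only real content here is the first one: verifying that $\Phi$ genuinely has the Futaki invariant as a factor, i.e. that $\Phi(t,\bb)=0$ whenever $\mathrm{Fut}_{[\oo_g+t\bb]}=0$. This should fall out of how $\Phi$ is defined in the proof of Theorem \ref{theo001} --- in the Pacard--Xu scheme one solves the equation orthogonally to the kernel of the linearization and $\Phi$ measures the residual obstruction, which is the pairing of the scalar curvature error against Killing potentials, i.e. (a nonlinear completion of) the Futaki invariant of the deformed class. A sanity check is provided by the expansions in Theorem \ref{theo001}(1)--(2): the leading coefficients $\int_M (\Pi_g(R_{i\bar j}\bb_{j\bar i}))^2\,\oo_g^n$ and $\int_M(\Pi_g(\bb_{i\bar j}\bb_{j\bar i}))^2\,\oo_g^n$ are, respectively, the squared norms of the first- and second-order variations of the Futaki invariant along the path $t\mapsto[\oo_g+t\bb]$, so these are consistent with $\Phi$ being a squared norm of $\mathrm{Fut}_{[\oo_g+t\bb]}$; no extra argument beyond quoting the construction should be needed.
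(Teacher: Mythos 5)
Your proposal is correct and follows essentially the same route as the paper: the author defines $\Phi(t,\bb)$ as the Futaki invariant of the specific vector field $X_{t,\bb}$ on the class $[\oo_t]$, so it vanishes whenever the Futaki invariant of $[\oo_g+t\bb]$ does, and the conclusion then follows from Lemma~\ref{lem003} (equivalently, from Theorem~\ref{theo001} as you quote it). Your identification of $\Phi$ as (approximately) a squared norm of the residual obstruction, and your rescaling remark for non-unit $\bb$, are both consistent with the paper's construction.
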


In fact, Theorem \ref{theo001} gives us more information on the
existence of constant scalar curvature metrics on the class
$[\oo_g+t\bb].$ We expand the function $\Phi(t, \bb)$ with respect
to $t$  at $t=0,$
$$\Phi(t, \bb)=\sum_{j=1}^m\,a_j(\bb)t^j+O(t^{m+1}),$$
where $a_j(\bb)$ are some functions of $\bb.$ If we assume some of
$a_j(\bb)$ vanish, then we can get ``almost constant scalar
curvature metrics'' in the following sense:

\begin{cor}\label{cor002} Let $\oo_g$ be a constant scalar curvature metric. There
are two positive constants $\ee$ and $C$ such that for any $\bb\in
\cH^{1, 1}(M) $ with \beq a_1(\bb)=a_2(\bb)=\cdots=a_m(\bb)=0,
\nonumber\eeq $M$ admits a K\"ahler metric $\oo_{t, \bb}\in
[\oo_g+t\bb]$ for $t\in (0, \ee)$
 satisfying \beq \|\s(\oo_{t, \bb})-\un \s(t)\|_{C^k(M)}\leq
Ct^{\frac {m+1}{2}},\nonumber\eeq where $\un \s(t)$ is the average
of the scalar curvature in $[\oo_g+t\bb].$\\

\end{cor}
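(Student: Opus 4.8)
The plan is to derive Corollary~\ref{cor002} directly from the machinery behind Theorem~\ref{theo001} rather than from the formal statement alone. The key point is that the function $\Phi(t,\bb)$ is \emph{not} an abstract quantity: in the Pacard--Xu scheme one first produces, for every small $t$ and every $\bb\in\cH^{1,1}(M)$ of unit norm, a K\"ahler potential $\varphi_{t,\bb}$ solving a \emph{projected} constant scalar curvature equation, i.e. the scalar curvature of $\oo_{t,\bb}:=\oo_g+t\bb+\i\p\b\p\varphi_{t,\bb}$ differs from its average $\un\s(t)$ only by a term lying in the finite-dimensional space of Killing potentials, and $\Phi(t,\bb)$ is (a multiple of) the squared $L^2$-norm of that obstruction term. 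Thus what I really have is a bound of the form
\beq
\|\s(\oo_{t,\bb})-\un\s(t)\|_{L^2(M)}^2 \;=\; c\,\Phi(t,\bb)\nonumber
\eeq
for a fixed constant $c>0$, together with uniform control of $\varphi_{t,\bb}$ in every $C^k$ norm (of order $o(t)$, say $\|\varphi_{t,\bb}\|_{C^{k+4}}\le C t$), which is part of the fixed-point construction. First I would make this quantitative input explicit by quoting the relevant estimates from the proof of Theorem~\ref{theo001}.

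The second step is the elliptic bootstrap. The obstruction term $\s(\oo_{t,\bb})-\un\s(t)$ is by construction a Killing potential for $\oo_{t,\bb}$ (or, more precisely, lies in the span of a fixed finite basis of such potentials transported via the implicit function theorem), and on a finite-dimensional space all norms are equivalent. Hence $\|\s(\oo_{t,\bb})-\un\s(t)\|_{C^k(M)}\le C_k\,\|\s(\oo_{t,\bb})-\un\s(t)\|_{L^2(M)} = C_k\sqrt{c\,\Phi(t,\bb)}$, where the metric-dependence of $C_k$ is harmless because $\oo_{t,\bb}$ is uniformly close to $\oo_g$ for $t$ small. So the whole corollary reduces to the estimate
\beq
\Phi(t,\bb)\;\le\; C\, t^{m+1}\nonumber
\eeq
uniformly over unit-norm $\bb$ with $a_1(\bb)=\cdots=a_m(\bb)=0$.

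The third step is to upgrade the pointwise-in-$\bb$ Taylor expansion $\Phi(t,\bb)=\sum_{j=1}^m a_j(\bb)t^j+O(t^{m+1})$ into a \emph{uniform} remainder estimate. Here I would invoke the smoothness of $\Phi$ on $(0,\ee_0)\times\cH^{1,1}(M)$ asserted in Theorem~\ref{theo001}: since $\cH^{1,1}(M)$ is finite-dimensional and the unit sphere in it is compact, Taylor's theorem with remainder applied to $t\mapsto\Phi(t,\bb)$ gives $|\Phi(t,\bb)-\sum_{j=1}^{m}a_j(\bb)t^j|\le C t^{m+1}$ with $C$ independent of the unit-norm direction $\bb$ (one bounds the $(m+1)$-st $t$-derivative of $\Phi$ on the compact set $[0,\ee_0/2]\times S(\cH^{1,1}(M))$, using that the $a_j$ are themselves continuous, hence bounded, on the sphere). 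Under the hypothesis $a_1(\bb)=\cdots=a_m(\bb)=0$ the sum drops out and we get exactly $\Phi(t,\bb)\le C t^{m+1}$. Combining the three steps yields $\|\s(\oo_{t,\bb})-\un\s(t)\|_{C^k(M)}\le C t^{(m+1)/2}$, and setting $\ee:=\ee_0/2$ finishes the proof.

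The main obstacle is Step~3, specifically the passage from a direction-by-direction expansion to a uniform-in-$\bb$ remainder: one must be sure that the constants hidden in the $O(t^3)$ and $O(t^5)$ error terms of Theorem~\ref{theo001} (and, more generally, in the order-$m$ expansion used implicitly in Corollary~\ref{cor002}'s statement) are locally bounded in $\bb$, which in turn requires that the potential $\varphi_{t,\bb}$ and all the auxiliary quantities in the Pacard--Xu fixed-point argument depend on $\bb$ in a jointly smooth (or at least jointly continuous with continuous $t$-derivatives) manner. This is a soft point once one has set up the construction with $\bb$ as a genuine parameter, but it is the only place where something beyond the formal expansion is needed; everything else — the reduction to the $L^2$ norm of the obstruction, the finite-dimensionality argument, and the $C^k$ equivalence of norms — is routine.
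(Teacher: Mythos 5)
Your proposal is correct and follows essentially the same route as the paper's proof of Corollary~\ref{cor015}: bound $\Phi(t,\bb)$ by $Ct^{m+1}$ using the vanishing of $a_1,\dots,a_m$, compare $\Phi(t,\bb)$ with the squared $L^2$-norm of the finite-dimensional obstruction $\langle\Xi_{t,\bb},\Te\rangle=\s(\oo_{t,\bb})-c_0(t)$, and convert to a $C^k$ bound by finite-dimensionality. The only caveat is that your claimed identity $\|\s(\oo_{t,\bb})-\un\s(t)\|_{L^2}^2=c\,\Phi(t,\bb)$ should really be a two-sided comparison up to a factor $1+O(t)$ (since $\te_{t,\bb}$ only agrees with $\langle\Xi_{t,\bb},\Te\rangle$ up to an error of order $t\|\Xi_{t,\bb}\|$), which is exactly what the estimates (\ref{eq004}), (\ref{eq003}) and (\ref{eq045}) supply and suffices for the argument.
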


The case of varying complex structures is more difficult. In general
the extremal metrics may not be perturbed when the complex structure
varies (cf. \cite{[BB]}). There are several results on this problem
recently. In \cite{[ACGT]} Apostolov-Calderbank-Gauduchon-T.
Friedman showed that the extremal metrics can be perturbed when the
deformation of the complex structure is invariant under the action
of a maximal compact connected subgroup $G$ of the isometry group of
the extremal metrics.
 Rollin-Simanca-Tipler extend this result in \cite{[RST]} and they allow
the group $G$ extends partially to the complex deformation.  Here we
combine Rollin-Simanca-Tipler and Pacard-Xu's methods to get a
similar result as in the case of fixed complex structures.

Before stating the next result, we need to introduce some notations.
Let $(M, J, g,  \oo_{g})$ be a compact K\"ahler manifold with
 a constant scalar curvature metric
$(g, \oo_g)$ and  $G $   the identity component of the isometry
group of $(M, g)$. We assume that a compact connected subgroup $G'$
of $G$ acts holomorphically on a complex deformation $(J_t, g_t,
\oo_t)$  and we denote by $\cB_{G'}$ the space of all such complex
deformations. Let $W^{2, k}_{G'}$ be the space of $G'$-invariant
functions in $W^{2, k}$ and $\cH_g^{\frak z_0'}$ be the space of the
space of holomorphic potentials of the elements in the center $\frak
z_0'$ of $\frak g_0'$, where $\frak g_0'$ is the ideal of the
Killing vector fields with zeroes  in the Lie algebra of $G'$. With
these notations, we have

\begin{theo}\label{theo002}   Let $(M, J, g,  \oo_{g})$ be a compact K\"ahler manifold with
 a constant scalar curvature metric
$  \oo_g$ and \beq  \ker\LL_g\cap W_{G'}^{2, k}\subset
\RR\oplus\cH_g^{\frak z_0'}.\label{eq051}\eeq
 For any  $(J_t, g_t, \oo_t) \in \cB_{G'}$,
there is a constant $\ee_0>0$ and a smooth function $\Psi:
\cB_{G'}\ri \RR$ such that if \beq \Psi(J_t, g_t, \oo_t)=0
\label{eq052}\eeq for some $t\in (0, \ee_0)$, then $M$ admits a
$G'$-invariant constant scalar curvature metric in $[\oo_t]$ with
respect to $J_t$.  In particular, the conclusion holds if the
condition (\ref{eq052}) is replaced by the vanishing of the  Futaki
invariant of $[\oo_t]$.
\end{theo}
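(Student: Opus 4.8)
The plan is to reduce Theorem~\ref{theo002} to the fixed--complex--structure situation by working on a single smooth manifold $M$ with a varying family of complex structures $J_t$, exactly as in the Rollin--Simanca--Tipler framework, and then run the Pacard--Xu--type functional construction that underlies Theorem~\ref{theo001} but now in the $G'$--invariant category. First I would set up the nonlinear scalar curvature operator: for each $t$ and each $G'$--invariant K\"ahler potential $\varphi$ (small in a suitable $W^{2,k}_{G'}$ ball), form the metric $\oo_{t,\varphi}=\oo_t+\i\partial_{J_t}\bar\partial_{J_t}\varphi$ and consider the map $(t,\varphi)\mapsto \s(\oo_{t,\varphi})-\underline{\s}(t)$, projected appropriately. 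The linearization at $t=0$, $\varphi=0$ is the Lichnerowicz operator $\LL_g$ restricted to $W^{2,k}_{G'}$; the structural hypothesis \eqref{eq051} says precisely that its kernel there is no larger than the obvious obstruction coming from $\RR\oplus\cH_g^{\frak z_0'}$, i.e.\ constants plus holomorphy potentials of central Killing fields of $G'$. This is the $G'$--invariant analogue of "the only obstruction is the Futaki invariant."

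The next step is to quotient out this finite--dimensional cokernel in a way that produces the scalar function $\Psi$. Following Pacard--Xu, rather than appealing to the surjective implicit function theorem (which would force a nondegeneracy assumption as in \cite{[LB1]}), I would enlarge the unknowns: solve the projected equation $\Pi_g^{\perp}(\s(\oo_{t,\varphi})-c)=0$ for $\varphi=\varphi(t)$ and $c=c(t)$ by the ordinary implicit function theorem on the complement of $\ker(\LL_g|_{W^{2,k}_{G'}})$, which is possible since $\LL_g$ is Fredholm and elliptic and, by \eqref{eq051}, self--adjoint with the stated kernel. Then define $\Psi(J_t,g_t,\oo_t)$ to be the single remaining scalar obstruction, namely the pairing of the leftover $\Pi_g(\s(\oo_{t,\varphi(t)})-c(t))$ against the generator(s) of $\cH_g^{\frak z_0'}$ — concretely the relative Futaki invariant of $[\oo_t]$ evaluated on the central direction. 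Smoothness of $\Psi$ in the parameter follows from smoothness of the implicit--function solution and smooth dependence of $\oo_t,J_t$ on $t$; when $\Psi$ vanishes for some $t\in(0,\ee_0)$, the full equation $\s(\oo_{t,\varphi(t)})=c(t)$ holds, giving a $G'$--invariant cscK metric in $[\oo_t]$ for $J_t$. The final sentence of the theorem is then immediate: the Futaki invariant of $[\oo_t]$ vanishing forces its restriction to $\frak z_0'$ to vanish, hence $\Psi=0$.

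The main obstacle — and the place where the $G'$--invariance is doing real work — is ensuring that the linearized operator is \emph{exactly} as nondegenerate as \eqref{eq051} asserts, uniformly in $t$. One has to verify: (i) that $\LL_g$ preserves $W^{2,k}_{G'}$ and is Fredholm self--adjoint there with a Hodge--type decomposition $W^{2,k}_{G'}=\ker(\LL_g|_{G'})\oplus\Im(\LL_g|_{G'})$; (ii) that $\ker(\LL_g|_{W^{2,k}_{G'}})$ consists only of real holomorphy potentials of $G'$--invariant holomorphic vector fields, which by a Matsushima--type argument on the reductive structure of the automorphism group reduces to the center $\frak z_0'$ — this is exactly where the subgroup $G'$ must act holomorphically on the whole family $(J_t,g_t,\oo_t)$, so that these Killing fields persist as holomorphic fields for $J_t$ and the obstruction space is genuinely finite--dimensional and $t$--independent; and (iii) a uniform (in small $t$) right inverse for the projected linearization, so the implicit function theorem applies on a $t$--interval of definite length. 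Once this Fredholm package is in place, the construction of $\Psi$ and the quadratic (or higher) error estimates are routine perturbation theory, in direct parallel with the proof of Theorem~\ref{theo001}.
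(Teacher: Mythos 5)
Your overall strategy is the paper's: work in the $G'$-invariant Sobolev space, use hypothesis (\ref{eq051}) to invert $\LL_g$ on the orthogonal complement of $\cH_g=\RR\oplus\cH_g^{\frak z_0'}$, solve a projected equation by the ordinary implicit function theorem (Lemma \ref{lem010}), and package the leftover finite-dimensional obstruction into a Futaki-type function $\Psi$. But there is a genuine gap at the one point where the Pacard--Xu idea does real work: the construction of a \emph{single scalar} $\Psi$ and the proof that its vanishing kills the \emph{entire} obstruction. After solving the auxiliary equation $\s(\oo_{t,\varphi})=\langle\td\Xi,\td\Te\rangle$ you are left with $d=\dim\cH_g^{\frak z_0'}$ coefficients $c_1(t),\dots,c_d(t)$. ``Pairing the leftover against the generators of $\cH_g^{\frak z_0'}$'' produces $d$ numbers, not one; demanding that all of them vanish is exactly the naive condition the method is designed to avoid, while choosing one generator gives a scalar whose vanishing does not imply cscK. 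The paper's $\Psi$ is the Futaki invariant of the \emph{solution-dependent} field $X_t=\sum_i c_i(t)X_i^t$, namely $\Psi=\int_M\te_t\,(\s(\oo_{t,\varphi_t})-c_0(t))\,\oo_{t,\varphi_t}^n$; this single quantity is shown to be comparable to $\sum_i c_i(t)^2$, so $\Psi=0$ forces $\Xi_t=0$ (this is Lemma \ref{lem003} in the fixed-$J$ case and Lemma \ref{lem007} here). Your assertion that ``when $\Psi$ vanishes \dots\ the full equation holds'' skips this mechanism entirely.

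Moreover, in the varying-$J$ setting that comparison is not routine perturbation theory. The potential $\te_t$ of $X_t$ is taken with respect to $(J_t,\oo_{t,\varphi_t})$, whereas the quantity controlled by $\|\Xi_t\|^2$ is built from the potentials $\te_i$ with respect to $(J,\oo_g)$. The paper devotes Lemma \ref{lem012} to the estimate $\|\te_t-\langle\Xi_t,\Te\rangle\|_{L^2(\oo_g)}\le C\ee_0\|\Xi_t\|$, which uses $\|J_t-J_0\|_{C^1}\le C\ee_0$, a $C^2$ bound on $\te_t$, and the normalization of the potentials; this is precisely where the hypothesis that $G'$ acts holomorphically on the whole family is used quantitatively (so that $X_i^t=J_t\xi_i+\i\,\xi_i$ stays holomorphic for $J_t$ and admits a potential at all). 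Two smaller points: your item (ii) (identifying the kernel with central holomorphy potentials) is the \emph{hypothesis} (\ref{eq051}) of the theorem, not something to be verified by a Matsushima argument; and the final ``In particular'' clause does follow as you say, but only because $\Psi$ is literally a Futaki invariant of an element of $\frak z_0'$, which is a consequence of the specific definition above rather than of a generic pairing.
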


 The condition (\ref{eq051}) coincides with the non-degeneracy
condition of the relative Futaki invariant, which is introduced by
Rollin-Simanca-Tipler in \cite{[RST]}. Here we get the same
condition from a different point of view.  We can get a similar
result as Corollary \ref{cor002} and a similar expansion of the
function $\Psi$ as in Theorem \ref{theo001}, which are omitted since
we will not use them in this
paper. \\

Finally, we will study the deformation of the K\"ahler-Ricci
soliton. A K\"ahler-Ricci soliton is a K\"ahler metric $\oo_g$ in
the first Chern class satisfying
$$Ric(\oo_g)-\oo_g=\pbp\theta_X,$$
where $\theta_X$ is a holomorphic potential of a holomorphic vector
field $X$. As  K\"ahler-Einstein metrics, the existence and
uniqueness of K\"ahler-Ricci soliton are important and has been
studied by a series of papers \cite{[TZ1]}\cite{[WZ]} etc. Since
K\"ahler-Ricci solitons must be in the first Chern class, there are
no K\"ahler-Ricci solitons if we deform the K\"ahler class. However,
inspired by the extremal K\"ahler metrics, we can consider whether
there is a metric satisfying the equation
$$\s(\oo_g)-\un \s=\Delta_g\theta_X,$$
where $\un \s$ is the average of the scalar curvature $\s.$ This
metric is first introduced by Guan in \cite{[Guan]} and is called
extremal solitons. Using the same idea as in
\cite{[LB1]}\cite{[LB2]}, we have the result:

\begin{theo}\label{theo003}Let $(M, J, g,  \oo_{g})$ be a compact K\"ahler manifold with
 a K\"ahler-Ricci soliton
$(g, \oo_g)$.
\begin{enumerate}
  \item If the complex structure is fixed, for any $\bb\in \cH^{1, 1}(M)$ there is an extremal soliton in the
K\"ahler class $[\oo_g+t\bb]$ for small $t$.
  \item  For any  $(J_t, g_t, \oo_t) \in \cB_{G}$ where $G$ is the identity component of
  the isometry group of   $(M, g),$  $M$
admits a $G$-invariant extremal soliton in  $[\oo_t]$ with respect
to $J_t$.
\end{enumerate}
\end{theo}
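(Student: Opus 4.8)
The plan is to prove Theorem \ref{theo003} by adapting the Lebrun--Simanca deformation machinery \cite{[LB1]}\cite{[LB2]} to the soliton setting, using the key observation that extremal solitons are precisely the critical points of a modified Calabi-type functional (the $X$-weighted $L^2$ norm of $\s - \un\s - \Delta_g\theta_X$, or equivalently the weighted scalar curvature in the sense of Guan \cite{[Guan]}), and that the K\"ahler--Ricci soliton equation $Ric(\oo_g) - \oo_g = \pbp\theta_X$ already forces the linearized operator to have a controlled kernel. First, for part (1), I would fix the complex structure $J$ and set up the problem on the Sobolev completion of the space of K\"ahler potentials. The linearization of the extremal soliton operator at the K\"ahler--Ricci soliton $\oo_g$ is a fourth-order self-adjoint (with respect to the $X$-weighted measure $e^{\theta_X}\oo_g^n$) elliptic operator $\LL_g^X$ whose kernel consists exactly of the real holomorphy potentials — this is the soliton analogue of the Lichnerowicz operator, and the key point is that on a K\"ahler--Ricci soliton the relevant Bochner-type identity closes up because of the defining equation. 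The cokernel is isomorphic to the same finite-dimensional space, so the implicit function theorem applies after restricting to the $L^2_X$-orthogonal complement, and the obstruction to solving in the full space is governed by a finite-dimensional reduced map into the dual of the holomorphy potentials.

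Second, the crucial structural fact — which is what makes part (1) work without any nondegeneracy hypothesis, unlike the constant scalar curvature case in Theorem \ref{theo001} — is that the reduced finite-dimensional equation is \emph{automatically solvable}. The reason is that the extremal soliton equation, being a critical point equation for a functional, always has its finite-dimensional obstruction lying in the image of a gradient, and more concretely: given any K\"ahler class $[\oo_g + t\bb]$, one minimizes Guan's modified Calabi functional over the (finite-dimensional, after reduction) space spanned by the holomorphy potentials acting via their flows, exactly as Calabi \cite{[Calabi2]} did for extremal metrics. Since the complex structure is fixed and the maximal torus is preserved, the minimization takes place over a finite-dimensional space of potentials and always attains a minimum, which is then the desired extremal soliton on $[\oo_g + t\bb]$ for $t$ small. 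Thus I would: (i) verify the kernel/cokernel identification for $\LL_g^X$; (ii) apply the IFT transverse to the obstruction space to get a family of "approximate extremal solitons" parametrized by the finite-dimensional reduced variable; (iii) invoke the variational characterization to solve the reduced equation by minimization.

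Third, for part (2) — varying complex structures with the full isometry group $G$ acting holomorphically on the deformation $(J_t, g_t, \oo_t) \in \cB_G$ — I would combine the above with the Rollin--Simanca--Tipler strategy \cite{[RST]} exactly as in the proof of Theorem \ref{theo002}. The point is that because we use the \emph{full} identity component $G$ of the isometry group (not merely a subgroup $G'$), the analogue of the nondegeneracy condition (\ref{eq051}) is automatically satisfied: the $G$-invariant kernel of $\LL_g^X$ is contained in $\RR \oplus \cH_g^{\frak z_0}$ because every $G$-invariant holomorphy potential is, after averaging, a Killing potential for an element of the center of $\frak g$, and the soliton vector field $X$ itself lies in this center. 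One then runs the same transverse-IFT-plus-finite-dimensional-reduction argument in the $G$-invariant category, and the reduced equation is again solved by minimizing Guan's functional over the finite-dimensional space of $G$-invariant holomorphy potentials; $G$-invariance is preserved along the construction because all operators involved commute with the $G$-action.

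The hard part will be step (i): establishing that the linearized extremal soliton operator $\LL_g^X$ at a K\"ahler--Ricci soliton has kernel exactly equal to the real holomorphy potentials, and is self-adjoint with the correct index, in both the fixed and varying settings. This requires carefully reworking the Lichnerowicz--Bochner computation with the weight $e^{\theta_X}$ and using the soliton equation $Ric(\oo_g) - \oo_g = \pbp\theta_X$ to kill the extraneous curvature terms — a computation analogous to the drift-Laplacian identities in Ricci-soliton theory (Tian--Zhu \cite{[TZ1]}). Once this identification is in hand, the rest is a fairly mechanical transcription of \cite{[LB1]} and \cite{[RST]}, with Guan's functional \cite{[Guan]} supplying the variational input that removes any nondegeneracy assumption, so no analogue of the obstruction function $\Phi$ or $\Psi$ from Theorems \ref{theo001}--\ref{theo002} is needed here.
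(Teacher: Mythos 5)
Your steps (i)--(ii) and your group-theoretic observation for part (2) are essentially what the paper does: at a K\"ahler--Ricci soliton the linearization does close up via the soliton equation (after applying the Green operator the paper reduces it to $\psi\mapsto -\Pi_g^{\perp}\bigl(\Delta_g\psi+\psi+X(\psi)\bigr)$, whose kernel is identified with the holomorphy potentials by Lemma~2.2 of \cite{[TZ1]}), and for the \emph{full} isometry group $G$ the invariant-kernel condition of Theorem~\ref{theo002} is automatic. The genuine gap is your step (iii). The paper never performs a Lyapunov--Schmidt reduction and never faces a finite-dimensional obstruction equation: it writes the extremal-soliton condition as
$$S(t,\varphi):=\Pi_g^{\perp}\Pi_{\varphi}^{\perp}G_{\varphi}\bigl(\s(\oo_{t,\varphi})-\un\s(t)\bigr)=0,$$
so that the holomorphic vector field is absorbed into the unknown; then $D_{\varphi}S|_{(0,0)}$ is already invertible from $\cH_{g,k+2}^{\perp}$ to $\cH_{g,k}^{\perp}$ and a single application of the implicit function theorem finishes both parts (for part (2) one only adds that $D_{\varphi}X^t|_{(0,0)}=0$, so the same linearization appears). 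This is exactly the Lebrun--Simanca mechanism for extremal metrics, and it, not a variational principle, is the reason no nondegeneracy hypothesis is needed.

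Your substitute for this mechanism --- minimizing Guan's modified Calabi functional ``over the finite-dimensional space spanned by the holomorphy potentials acting via their flows'' and asserting that a minimum is always attained --- is unjustified. That space is a noncompact orbit of a subgroup of the automorphism group; nothing guarantees properness or coercivity of the functional on it, and the result of Calabi in \cite{[Calabi2]} that you invoke is a uniqueness/structure statement about the extremal vector field, not an existence-by-minimization argument. Even granting a minimizer, you would still need to verify that criticality of the restricted functional coincides with vanishing of your Lyapunov--Schmidt obstruction map, which you do not address. As written, the reduced equation is not solved; you should replace step (iii) by the projected formulation above. A secondary caution: the clean identification of the kernel in step (i) really does use the K\"ahler--Ricci soliton equation to pass from the fourth-order operator to $\Delta_g\psi+\psi+X(\psi)$; the paper's remark after Theorem~\ref{theo101} points out that for a general extremal soliton this identification is open, so your ``fairly mechanical transcription'' would not extend beyond the soliton case either.
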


Under the assumption of the second part of Theorem \ref{theo003}, if
in addition  $[\oo_t]$ is the first Chern class of $(M, J_t)$, then
$[\oo_t]$ admits a K\"ahler-Ricci soliton. It is interesting to see
whether Theorem \ref{theo003} holds  for any extremal soliton. There
is a technical difficulty
in the proof and we cannot overcome it here. \\

\noindent {\bf Acknowledgements}: The author would like to thank
Professor F. Pacard and Y. L. Shi for kindly sharing their insights
on the deformation theory. The author would also like to thank
Professor X. X. Chen and X. H. Zhu for their encouragement and
numerous suggestions.

\section{Deformation of cscK metrics}\label{Sec001}
In this section, we will use the method of Pacard-Xu in \cite{[PX]}
to solve the constant scalar curvature equation  and show that a
small perturbation of the K\"ahler class under some assumptions will
admit a constant scalar curvature metric.

\subsection{Fixed complex structure}

 We follow Lebrun-Simanca's notations in
\cite{[LB1]}\cite{[LB2]}.  Let $(M, J, g, \oo_g)$ be a compact
K\"ahler manifold of complex dimension $n$ with a constant scalar
curvature metric $\oo_g.$
  By Matsushima-Lichnerowicz theorem,  the identity component $G$
of the isometry group of $(M, g)$ is a maximal compact subgroup of
the identity component $\Aut_0(M, J)$ of  the automorphism group
$\Aut(M, J).$ Let $W^{2, k}_G(M)$ be the real $k$-th Sobolev space
of $G$-invariant real-valued functions in $W^{2, k}(M).$ By the
Sobolev embedding theorem, the space $W^{2, k}(M)$ is contained in
$C^{l}(M)$ if $k>n+l$. The space of real-valued $\oo_g$-harmonic
$(1, 1)$ forms on $M$ is denoted by $\cH^{1, 1}(M).$ Since the
metric $g$ is $G$-invariant, every $g$-harmonic form $\bb\in \cH^{1,
1}(M)$ is $G$-invariant. Let $\cP(M, \oo_g)$ be the space of
K\"ahler potentials of $\oo_g$ and $\cU$ be a small neighborhood of
the origin in $W^{2, k}_G(M)$. We can assume that $\cU\subset \cP(M,
\oo_t)$ for small $t$ where $\oo_t=\oo_g+t\bb$. Thus, for any
function $\varphi\in \cU$ the metric
$$\oo_{t, \varphi}=\oo_g+t\bb+\pbp \varphi,$$
is $G$-invariant.\\

Let $\frak h(M, J)$ be the space of holomorphic vector fields on
$(M, J).$ By Matsushima-Lichnerowicz theorem, the Lie algebra $\frak
h(M, J)$ can be decomposed as a direct sum
$$\frak h(M, J)=\frak h_0(M, J)\oplus \frak a(M, J),$$
where $\frak a(M, J)$ consists of the autoparallel holomorphic
vector fields of $(M, J)$ and $\frak h_0(M, J)$ is the space of
holomorphic vector fields with zeros. Let $\frak g$ the Lie algebra
of $G$ and  $\frak g_0$ the ideal of Killing vector fields with
zeros. Any element $\xi\in \frak g_0$ corresponds to a holomorphic
vector field $X=J\xi+\i \xi,$ and we define a smooth function
$\te_X$ satisfying
$$i_X\oo_g=\i \bar\p \theta_X,\quad \int_M\;\te_X\,\oo_g^n=0.$$
The function $\te_X$ is called holomorphic potential of $X$ with
respect to $\oo_g.$  Since $g$ is $G$-invariant,   $\te_X$ is a
real-valued function. Let $\frak z\subset \frak g$ denote the center
of $\frak g$ and  $\frak z_0=\frak z\cap \frak g_0.$ Then  $\frak
z_0$ corresponds precisely to the Killing vector fields in $\frak
g_0$ whose holomorphic potentials are $G$-invariant.

 Now we choose a basis $\{\xi_1, \cdots,
\xi_d\}$
 of $\frak z_0$ such that the functions $\{\te_0, \te_1, \cdots,
 \te_d\}$, where $\te_0=1$ and $\te_i$ is the holomorphic potential
 of the holomorphic vector fields   $X_i=J\xi_i+\i\xi_i $,   are orthonormal
with respect to the $L^2$ inner product induced by the metric $g$
$$\langle f, g\rangle_{L^2(\oo_g)}=
\frac 1{V_g}\int_M\;fg\,\oo_g^n,\quad f, g\in C^{\infty}(M, \RR),$$
where $V_g=\int_M\;\oo_g^n.$  Using this product, the space $W^{2,
k}_G$ has a decomposition
$$W^{2, k}_G=\cH_g\oplus \cH_{g, k}^{\perp},$$
where $\cH_g$ is spanned by the set $\{\theta_0, \te_1, \cdots,
\te_d\}$ over $\RR.$  We define the associate projection operator
 \beqs
\td\Pi_g: W^{2, k}_G&\ri &\cH_g\\
f&\ri &\sum_{i=0}^d\langle\theta_i, f\rangle_{L^2(\oo_g)}\theta_i,
 \eeqs and the operator $\td \Pi_g^{\perp}=I-\td\Pi_g.$\\

 For any $\varphi\in \cU$, we   calculate the expansion of the
scalar curvature of $\oo_{t, \varphi}$ at $(t, \varphi)=(0, 0):$
$$s(\oo_{t,  \varphi })=s(\oo_g)-\Big(\Delta^2_g\varphi+R_{i\bar j}\varphi_{j\bar i}
+t\Delta_g\tr_{\oo}\bb+t R_{i\bar j}\bb_{j\bar
i}\Big)+Q_g(\Na^2\varphi, t\bb),$$ where $Q_g$ collects all the
higher order terms. Note that $\tr_g\bb$ is a constant since $\bb$
is harmonic. The linearized operator of $s(\oo_{t, \varphi})$ at
$(t, \varphi)=(0, 0)$ is given by
$$\LL_g\varphi=\Delta^2_g\varphi+R_{i\bar j}\varphi_{j\bar
i},
$$ and  for
any $f\in \ker\LL_g$ we can associate a holomorphic vector field
$X_f=J\Na f+\i \Na f $ which has nonempty zeros.  In general,
$\LL_g$ has nontrivial kernel and it is difficult to solve the
constant scalar curvature equation.

Now we have the following result:

\begin{theo}\label{theo014}
Let $(M, \oo_g)$ be a compact K\"ahler manifold with a constant
scalar curvature metric $\oo_g.$ There exists $\ee_0>0$ and a smooth
function
$$\Phi: (0, \ee_0)\times \cH^{1, 1}(M)\ri \RR$$
such that if $\bb\in \cH^{1, 1}(M)$ has unit norm and  satisfies
$\Phi(t,
  \bb)=0$ for some $t\in (0, \ee_0)$ then $M$ admits a constant scalar curvature metric
   in the K\"ahler class $[\oo_g+t\bb].$

  \end{theo}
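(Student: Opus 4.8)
The plan is a Lyapunov--Schmidt reduction in the spirit of Pacard--Xu. First one solves, by the implicit function theorem, an auxiliary problem whose linearisation is invertible; this produces a K\"ahler metric $\omega_{t,\psi_t}$ in $[\omega_g+t\beta]$ whose scalar curvature is constant modulo the finite--dimensional space $\mathrm{span}_\RR\{\theta_1,\dots,\theta_d\}$. One then identifies this finite--dimensional obstruction, up to an invertible change of coordinates, with the Futaki invariant of $[\omega_g+t\beta]$, and defines $\Phi$ as its squared norm. In detail: one looks for a pair $(\varphi,c)\in\cU\times\RR$, $\varphi$ small, solving
\[
  N(t,\varphi,c):=s(\omega_g+t\beta+\pbp\varphi)-c=0 ,
\]
any solution of which gives a constant scalar curvature metric in $[\omega_g+t\beta]$. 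The differential of $(\varphi,c)\mapsto N(0,\varphi,c)$ at $(0,s(\omega_g))$ is $(\dot\varphi,\dot c)\mapsto-\LL_g\dot\varphi-\dot c\,\theta_0$. In the $G$--invariant $L^2(\omega_g)$--orthogonal splitting $W^{2,k}_G=\cH_g\oplus\cH^\perp_{g,k}$, the operator $\LL_g$ is fourth order, elliptic, and formally self--adjoint, with kernel on $W^{2,k}_G$ equal to $\cH_g$ by the Matsushima--Lichnerowicz theory recalled in Section \ref{Sec001}; hence $\LL_g\colon\cH^\perp_{g,k}\to\cH^\perp_{g,k-4}$ is an isomorphism, and the differential above is an isomorphism from $\cH^\perp_{g,k}\oplus\RR$ onto the closed subspace $\cH^\perp_{g,k-4}\oplus\RR\theta_0$ of $W^{2,k-4}_G$. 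Keeping $c$ as an unknown is exactly what lets one reach the $\theta_0$--direction, which is otherwise obstructed since the average of $s$ is cohomologically fixed but metric--dependent.

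Let $\Pi'$ be the projection of $W^{2,k-4}_G$ onto $\cH^\perp_{g,k-4}\oplus\RR\theta_0$. The implicit function theorem applied to $(t,\varphi,c)\mapsto\Pi'\,N(t,\varphi,c)$ yields, for $|t|<\ee_0$ --- with $\ee_0$ uniform for $\beta$ in the unit sphere of the finite--dimensional space $\cH^{1,1}(M)$, by compactness --- unique small $\psi_t\in\cH^\perp_{g,k}$ and $c_t\in\RR$, smooth in $(t,\beta)$, with $\psi_0=0$, $c_0=s(\omega_g)$, and $\Pi'\,N(t,\psi_t,c_t)=0$. For $t$ small, $\omega_{t,\psi_t}:=\omega_g+t\beta+\pbp\psi_t$ is a genuine $G$--invariant K\"ahler metric in $[\omega_g+t\beta]$ (positive since $\psi_t\to0$ in $C^0$ by Sobolev embedding, $k$ being large), and elliptic regularity makes it smooth. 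Since $\Pi'\,N(t,\psi_t,c_t)=0$ kills both the $\cH^\perp_{g,k-4}$-- and the $\RR\theta_0$--components, one has
\[
  s(\omega_{t,\psi_t})=c_t+\sum_{i=1}^{d}b_i(t)\,\theta_i,\qquad b_i(t):=\langle s(\omega_{t,\psi_t})-c_t,\ \theta_i\rangle_{L^2(\omega_g)},
\]
and $\omega_{t,\psi_t}$ has constant scalar curvature precisely when the reduced obstruction $b(t)=(b_1(t),\dots,b_d(t))$ vanishes.

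It remains to relate $b(t)$ to the Futaki invariant. As $J$ is fixed, each $X_k=J\xi_k+\i\xi_k$ is a fixed holomorphic vector field with zeros, and $\cF_{[\omega_g+t\beta]}(X_k)$ does not depend on the chosen metric in the class; computing it with $\omega_{t,\psi_t}$ and writing $\theta_k^{(t)}$ for the $\omega_{t,\psi_t}$--holomorphy potential of $X_k$ normalised by $\int_M\theta_k^{(t)}\,\omega_{t,\psi_t}^n=0$, one obtains the exact identity
\[
  \cF_{[\omega_g+t\beta]}(X_k)=\sum_{i=1}^{d}A_{ki}(t)\,b_i(t),\qquad A_{ki}(t):=\int_M\theta_i\,\theta_k^{(t)}\,\omega_{t,\psi_t}^n .
\]
Since $A_{ki}(0)=\int_M\theta_i\theta_k\,\omega_g^n=V_g\,\delta_{ki}$, the matrix $A(t)$ is invertible for $t$ small, so $b(t)=0$ if and only if $\cF_{[\omega_g+t\beta]}$ vanishes on $\frak z_0$. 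One therefore sets
\[
  \Phi(t,\beta):=\sum_{i=1}^{d}b_i(t)^2 ,
\]
which is smooth and nonnegative on $(0,\ee_0)\times\cH^{1,1}(M)$; by the displayed identity it equals, up to the invertible matrix $A(t)$, the squared norm of the restriction of the Futaki invariant of $[\omega_g+t\beta]$ to $\frak z_0$, so in particular $\Phi(t,\beta)=0$ whenever the Futaki invariant of $[\omega_g+t\beta]$ vanishes. If $\Phi(t,\beta)=0$ then $b(t)=0$, hence $s(\omega_{t,\psi_t})\equiv c_t$ and $\omega_{t,\psi_t}$ is a constant scalar curvature K\"ahler metric in $[\omega_g+t\beta]$; this proves the theorem, and Corollary \ref{cor001} is immediate.

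The analytic bookkeeping of the first paragraph --- Fredholm theory for $\LL_g$ with its kernel present, and the device of treating $c$ as an extra unknown to absorb the volume normalisation --- is routine once set up; I expect the real content to be that the finite--dimensional obstruction $b(t)$ is genuinely governed by the Futaki invariant, which is what makes $\Phi$ simultaneously a valid detector of solutions (needed for the theorem) and explicitly computable (needed for the corollaries). With the identity $\cF=A(t)\,b$ in hand, the expansions in Theorem \ref{theo001} follow by Taylor expanding $b_i(t)$: using $\LL_g\theta_i=0$ and $\langle\theta_i,1\rangle_{L^2(\omega_g)}=0$ one gets $b_i(t)=-t\,\langle R_{i\bar j}\beta_{j\bar i},\theta_i\rangle_{L^2(\omega_g)}+O(t^2)$, giving the $t^2$ term; when $\omega_g$ is K\"ahler--Einstein and $\beta$ is traceless, $R_{i\bar j}\beta_{j\bar i}$ is a constant multiple of $\tr_{\omega_g}\beta=0$, so $\psi_t=O(t^2)$ and the leading contribution comes from the second order term in $t$ of $s(\omega_{t,\psi_t})$, which is proportional to $\beta_{i\bar j}\beta_{j\bar i}$, giving the $t^4$ term.
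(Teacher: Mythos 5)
Your proposal is correct and follows the same Pacard--Xu strategy as the paper: solve the auxiliary equation $s(\oo_{t,\varphi})=\langle\td\Xi,\td\Te\rangle$ by the implicit function theorem using the invertibility of $\LL_g$ on $\cH_{g,k}^{\perp}$ (the paper's Lemma \ref{lem2020} treats the constant together with the $\te_1,\dots,\te_d$ components as a single vector $\td\Xi\in\RR^{d+1}$ read off after solving the projected equation, rather than carrying $c$ as an extra unknown, but this is the same reduction), and then detect the vanishing of the finite-dimensional obstruction through the Futaki invariant. The one substantive difference is the definition of $\Phi$: you take $\Phi=\sum_i b_i(t)^2$, the squared norm of the obstruction vector itself, so that $\Phi=0\Rightarrow$ cscK is immediate and the work goes into the identity $\cF_{[\oo_t]}(X_k)=\sum_iA_{ki}(t)b_i(t)$ with $A(t)$ invertible, needed for Corollary \ref{cor001}; the paper instead sets $\Phi(t,\bb)=\int_MX_{t,\bb}h_{\oo_{t,\bb}}\,\oo_{t,\bb}^n$, i.e.\ the Futaki invariant of the obstruction vector field $X_{t,\bb}=\sum_kc_k(t)X_k$, which is the quadratic form $\Xi^{T}A(t)\Xi$ in your notation, so that the corollary is immediate and the work (Lemma \ref{lem003}) goes into showing $\Phi=0\Rightarrow\Xi_{t,\bb}=0$. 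The two choices are equivalent for small $t$, and the analytic content is identical in both: one must show that the holomorphic potential $\te_k^{(t)}$ of $X_k$ with respect to $\oo_{t,\varphi_t}$ is $L^2$-close to $\te_k$, uniformly in $\bb$ (the paper's estimate (\ref{eq003})); in your write-up this is exactly what is needed to justify that $A(t)$ is continuous at $t=0$ and hence invertible, so you should not treat that step as automatic --- it is the technical heart of the paper's Lemma \ref{lem003} and uses the uniform bound $\|\varphi_{t,\bb}\|_{W^{2,k+4}}\leq C\ee_0$ together with the eigenvalue decomposition of $\Delta_g$ and the normalization of the potentials. Your remark that $\ee_0$ can be taken uniform over the unit sphere of $\cH^{1,1}(M)$ by compactness is a point the paper leaves implicit, and your sketch of the expansions at the end is consistent with Lemmas \ref{lem004}--\ref{lem005}.
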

\begin{proof}
Consider the equation for $(\varphi, \td\Xi)\in \cH_{g,
k}^{\perp}\times \RR^{d+1}:$ \beq s(\oo_{t, \varphi})=\langle \td
\Xi, \td \Te\rangle,\label{eq0010} \eeq where $\td\Te=(\te_0,
\te_1,\cdots, \te_d)$ and $\td \Xi=(c_0, c_1,\cdots, c_d)\in
\RR^{d+1}$ is a vector with
$$\langle \td \Xi, \td \Te\rangle=c_0+\sum_{i=1}^d\,c_i\theta_i.$$
Note that if the equation (\ref{eq0010}) holds, then $c_0$ is the
average of the scalar curvature and it only depends on the K\"ahler
class $[\oo_t].$  Applying the implicit function theorem, we have

\begin{lem}\label{lem2020}Fix $\bb\in \cH^{1, 1}(M)$.  Then there exist   $\ee_0, C>0$ such
 that for
all $t\in (0, \ee_0)$   there exists a unique solution $(\varphi_{t,
\bb}, \td \ba_{t, \bb})\in \cH_{g, k+4}^{\perp}\times \RR^{d+1}$ of
the equation (\ref{eq0010}) and satisfying the estimates \beq
\|\varphi_{t, \bb}\|_{W^{2, k+4}(M)}\leq C\ee_0,\quad \|\td \ba_{t,
\bb}\|\leq C\ee_0,\label{eq012}\eeq where $\|\td \Xi\|$ denotes the
standard Euclidean norm of $\td \Xi$ in $\RR^{d+1}.$

\end{lem}
\begin{proof}We consider the operator
$$ \td\Pi_g^{\perp}s(\oo_{t, \varphi}): (-\ee, \ee)\times \cH_{g, k+4}^{\perp}\ri \RR.$$
 Since the linearized operator at $(t, \varphi)=(0, 0)$
\beqs
 D_{\varphi} \td\Pi^{\perp}s(\oo_{t, \varphi})|_{(0, 0)}: \cH_{g, k+4}^{\perp}&\ri & \cH_{g, k }^{\perp}\\
  \psi &\ri &-\LL_g\psi
\eeqs is invertible, for small $t$ there is a solution $ \varphi_{t,
\bb}\in \cH_{g, k+4}^{\perp}$ such that $\td \Pi_g^{\perp}s(\oo_{t,
\varphi_{t, \bb}})=0$ and we can find a vector $\hat \Xi_{t, \bb}\in
\RR^{d+1}$ such that \beq s(\oo_{t, \varphi_{t, \bb}})=\langle
\td\Xi_{t, \bb}, \td \Te\rangle. \label{eq017}\eeq The  estimates in
(\ref{eq012})
follows  directly from the implicit function theorem. \\

\end{proof}

Now we want to know when the solution $(\varphi_{t, \bb}, \td
\Xi_{t, \bb})$ of (\ref{eq0010}) has  constant scalar curvature. It
suffices to show that the vector $\td \Xi_{t, \bb}=(c_0, c_1,
\cdots, c_d)$ satisfies $c_i=0$ for all $1\leq i\leq d$. Given
 $\bb\in \cH^{1, 1}(M)$, the solution $(\varphi_{t, \bb}, \td \ba_{t,
\bb})$ determines a holomorphic vector field \beq X_{t,
\bb}=\sum_{k=1}^d\,c_k(t) X_k\in \frak h_0(M, J),\label{eq009} \eeq
where $X_k$ is the holomorphic vector field defined by $\theta_k$
and $c_i(t)$ are the entries of the vector $\td \Xi_{t,
\bb}=(c_0(t), c_1(t),\cdots, c_d(t)).$ For simplicity, we write
$\oo_{t, \bb}=\oo_{t, \varphi_{t, \bb}}$ for short.  Now we define a
function on $(0, \ee_0)\times\cH^{1, 1}(M)$ by
$$\Phi(t, \bb)=\int_M\;X_{t, \bb} h_{\oo_{t, \bb}}\,\oo_{t,
\bb}^n,$$ where $h_{\oo_{t, \bb}}$ is determined by $s(\oo_{t,
\bb})-c_0(t) =\Delta_{\oo_{t, \bb}} h_{\oo_{t, \bb}}.$ Note that the
function $\Phi(t, \bb)$ is exactly the Futaki invariant of $(X_{t,
\bb}, [\oo_t])$, and it is zero if the Futaki invariant of $[\oo_t]$
vanishes. Let $\Pi_g$ be the $L^2$-projection from $W_{G}^{2, k}(M)$
to the subspace which is spanned by the functions $\{\te_1, \cdots,
\te_d\}$. We denote by $\Xi_{t, \bb}=(c_1,
\cdots, c_d)$ the vector in $\RR^d$ which removes $c_0$ from $\td \Xi_{t, \bb}$
and $\Te=(\te_1, \cdots, \te_d).$ With these notations, we have the lemma: \\

\begin{lem}\label{lem003} There is a $\ee_0>0$ such that, if $t\in (0,
\ee_0)$ and  if $\bb\in \cH^{1, 1}(M)$ with unit norm is a zero of
the function $\Phi(t, \bb)$ then $\oo_{t, \bb}$ has constant scalar
curvature.

\end{lem}

\begin{proof}Note that
\beq \Phi(t, \bb)=\int_M\;\te_{t, \bb} (s(\oo_{t,
\bb})-c_0(t))\,\oo_{t, \bb}^n= \int_M\;\te_{t, \bb}\langle  \ba_{t,
\bb},
  \Theta\rangle\,\oo_{t, \bb}^n,\label{eq004}\eeq where $\te_{t,
\bb}$ is the holomorphic potential of $ X_{t, \bb}$ with respect to
the metric $\oo_{t, \bb}$ under the normalization condition \beq
\int_M\;\theta_{t, \bb}\,\oo_{t, \bb}^n=0. \label{eq301}\eeq
 We
claim that there is a constant $C$ independent of $t$ and $\bb$ such
that \beq \|\te_{t, \bb}-\langle \ba_{t, \bb},
\Theta\rangle\|_{L^2(\oo_g)}\leq C\;t\|\ba_{t, \bb}\|. \label{eq003}
\eeq In fact, by definition we have
$$  i_{X_{t, \bb}}\oo_g=\i\bar \p \langle   \ba_{t, \bb},   \Theta\rangle,
\quad i_{X_{t, \bb}}\oo_{t, \bb}=\i\bar \p \theta_{t, \bb}.$$ This
implies that \beqs \i\bar \p(\theta_{t, \bb}- \langle   \ba_{t,
\bb}, \Theta\rangle)&=&i_{X_{t, \bb}}(t\bb+\pbp\varphi_{t,
\bb})=\sum_{k=1}^d\;c_k(t)i_{X_k}(t\bb+\pbp\varphi_{t, \bb}),\eeqs
where we used the definition (\ref{eq009}) of $X_{t, \bb}.$ Since by
Lemma \ref{lem2020} $\|\varphi_{t, \bb}\|_{W^{2, k+4}(M)}\leq
C\ee_0$ for any $t\in (0, \ee_0)$, we have \beqs
\Big|\Delta_g(\theta_{t, \bb}- \langle \ba_{t, \bb},
\Theta\rangle)\Big|&=&\Big|\sum_{k}\,c_k(t)\,\tr_g\Big(\p (
i_{X_k}(t\bb+\pbp\varphi_{t,
\bb}))\Big)\Big|\\&\leq&C\,\ee_0\,\|\ba_{t, \bb}\|,\eeqs which
implies that
$$\|\theta_{t, \bb}-
\langle \ba_{t, \bb}, \Theta\rangle\|_{L^2(\oo_g)}\leq
C\,\ee_0\,\|\ba_{t, \bb}\|$$ by the eigenvalue decomposition of
$\Delta_g$ and the normalization condition (\ref{eq301}). Thus, the
inequality (\ref{eq003})
is proved. \\

Since $\{\theta_0, \cdots, \theta_d\}$ is an orthonormal basis of
$\cH_{g}$, we have \beq \| \ba_{t, \bb}\|^2=\int_M\;\langle \ba_{t,
\bb}, \Theta\rangle^2\,\oo_{g}^n\leq C\;\int_M\;\langle \ba_{t,
\bb}, \Theta\rangle^2\,\oo_{t, \bb}^n,\label{eq045} \eeq where we
used the fact that $\|\varphi\|_{W^{2, k+4}(M)}\leq C\,\ee_0$ when
$t$ small by Lemma \ref{lem2020}. The assumption $\Phi(t, \bb)=0$
together with (\ref{eq045}) and (\ref{eq003}) implies that \beqs
\int_M\;\langle \ba_{t, \bb}, \Theta\rangle^2\,\oo_{t,
\bb}^n&=&\int_M\;\Big(
\langle \Xi_{t, \bb}, \Te\rangle-\te_{t, \bb}\Big)\langle \Xi_{t, \bb}, \Te\rangle\,\oo_{t, \bb}^n\\
&\leq& C\;\ee_0\;\| \ba_{t, \bb}\|\cdot\|\langle   \ba_{t, \bb},
\Theta\rangle
\|_{L^2(\oo_g)}\\
&\leq &C\,\ee_0\, \int_M\;\langle   \ba_{t, \bb},
\Theta\rangle^2\,\oo_{t, \bb}^n.\eeqs Thus, if $\ee_0$ is small
enough  we have $ \Xi_{t,
\bb}=0$. The lemma is proved.\\

\end{proof}

Thus, the first part of Theorem \ref{theo001} and Corollary
\ref{cor001} follow directly  from Lemma \ref{lem003}.\\
\end{proof}

 Observe that we can expand the
function $\Phi(t, \bb)$ with respect to $t$ at $t=0:$
$$\Phi(t, \bb)=a_1(\bb)t+a_2(\bb)t^2+a_3(\bb)t^3+\cdot+a_m(\bb)t^m+O(t^{m+1}), $$
where $a_j(\bb)$ are the coefficients of $t^j$.  We want to ask what
kinds of K\"ahler metric
exists if we only assume the first several terms of   $a_i(\bb)$ vanish.    \\

\begin{cor}\label{cor015}Let $\oo_g$ be a constant scalar curvature metric. There
are two constants $\ee, C>0$ such that for any    harmonic form
$\bb\in \cH^{1, 1}(M) $ with unit norm and  \beq
a_1(\bb)=a_2(\bb)=\cdots=a_m(\bb)=0,\label{eq020}\eeq $M$ admits a
K\"ahler metric $\oo_{t, \bb}\in [\oo_t+t\bb]$ for $t\in (0, \ee_0)$
satisfying \beq \|\s(\oo_{t, \bb})-c_0(t)\|_{C^k(M)}\leq Ct^{\frac
{m+1}{2}}.\label{eq022}\eeq

\end{cor}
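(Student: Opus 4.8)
The plan is to track how the solution $(\varphi_{t,\bb},\td\Xi_{t,\bb})$ of equation (\ref{eq0010}) behaves under the hypothesis (\ref{eq020}), and to convert the vanishing of the first $m$ Taylor coefficients of $\Phi(t,\bb)$ into an upper bound on the vector $\Xi_{t,\bb}=(c_1,\dots,c_d)$. Recall that by Lemma \ref{lem2020} the pair $(\varphi_{t,\bb},\td\Xi_{t,\bb})$ depends smoothly on $t$ (the implicit function theorem gives smooth dependence on the parameter), and the metric $\oo_{t,\bb}=\oo_g+t\bb+\pbp\varphi_{t,\bb}$ satisfies $s(\oo_{t,\bb})-c_0(t)=\langle\Xi_{t,\bb},\Te\rangle$. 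So the $C^k$ deviation of the scalar curvature from its average is governed by $\|\Xi_{t,\bb}\|$ up to a uniform constant, since the $\theta_i$ are fixed smooth functions. Hence it suffices to prove $\|\Xi_{t,\bb}\|\le C t^{(m+1)/2}$.

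The key is the chain of estimates already established inside the proof of Lemma \ref{lem003}. Running that argument \emph{without} assuming $\Phi(t,\bb)=0$, one gets from (\ref{eq045}), (\ref{eq003}) and (\ref{eq004}) the inequality
\beq
\|\Xi_{t,\bb}\|^2\le C\int_M\langle\Xi_{t,\bb},\Te\rangle^2\,\oo_{t,\bb}^n
= C\,\Phi(t,\bb)+C\int_M\big(\langle\Xi_{t,\bb},\Te\rangle-\theta_{t,\bb}\big)\langle\Xi_{t,\bb},\Te\rangle\,\oo_{t,\bb}^n,
\nonumber
\eeq
and the last integral is bounded by $C\,\ee_0\,\|\Xi_{t,\bb}\|^2$ by (\ref{eq003}); absorbing it for $\ee_0$ small yields $\|\Xi_{t,\bb}\|^2\le C\,|\Phi(t,\bb)|$. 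Now the hypothesis (\ref{eq020}) says the Taylor expansion of $\Phi(\cdot,\bb)$ at $0$ has $a_1=\cdots=a_m=0$, so by Taylor's theorem with remainder $|\Phi(t,\bb)|\le C t^{m+1}$ uniformly for $\bb$ of unit norm (the coefficients $a_j(\bb)$ and the remainder depend continuously on $\bb$ on the compact unit sphere of $\cH^{1,1}(M)$). Combining, $\|\Xi_{t,\bb}\|^2\le C t^{m+1}$, i.e. $\|\Xi_{t,\bb}\|\le C t^{(m+1)/2}$, and therefore
\beq
\|s(\oo_{t,\bb})-c_0(t)\|_{C^k(M)}=\Big\|\sum_{i=1}^d c_i(t)\theta_i\Big\|_{C^k(M)}\le C\,\|\Xi_{t,\bb}\|\le C\,t^{(m+1)/2},
\nonumber
\eeq
which is (\ref{eq022}) (note $c_0(t)$ is exactly the average $\un\s(t)$ of the scalar curvature in $[\oo_t]$).

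The main obstacle is the uniformity of the Taylor estimate $|\Phi(t,\bb)|\le C t^{m+1}$ in $\bb$: one must check that $\Phi(t,\bb)$ is jointly smooth (or at least $C^{m+1}$ in $t$ with derivatives continuous in $\bb$) on $(0,\ee_0)\times\{\|\bb\|=1\}$, which follows from the smooth dependence of $\varphi_{t,\bb}$, $\td\Xi_{t,\bb}$, and hence of $\oo_{t,\bb}$ and $\theta_{t,\bb}$ on $(t,\bb)$ given by the implicit function theorem, together with compactness of the unit sphere in the finite-dimensional space $\cH^{1,1}(M)$. A minor point is that the constant in (\ref{eq003}) and the elliptic/Sobolev constants relating $L^2(\oo_g)$, $L^2(\oo_{t,\bb})$ and $C^k$ norms must be taken uniformly over $t\in(0,\ee_0)$ and over unit-norm $\bb$, which is again guaranteed by the uniform $W^{2,k+4}$ bound on $\varphi_{t,\bb}$ from Lemma \ref{lem2020}. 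Once these uniformities are in place the corollary is immediate from the displayed chain of inequalities.
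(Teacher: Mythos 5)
Your proposal is correct and follows essentially the same route as the paper: both arguments combine (\ref{eq004}), (\ref{eq003}) and (\ref{eq045}) to show $\|\Xi_{t,\bb}\|^2\leq C\,|\Phi(t,\bb)|$ for small $t$, invoke the Taylor hypothesis (\ref{eq020}) to get $|\Phi(t,\bb)|\leq Ct^{m+1}$, and then read off (\ref{eq022}) from $s(\oo_{t,\bb})-c_0(t)=\sum_i c_i(t)\te_i$. The only difference is cosmetic (you absorb the error term before invoking the Taylor bound, and you spell out the uniformity in $\bb$ that the paper leaves implicit).
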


\begin{proof}We follow the notations in Lemma \ref{lem003}. By the
assumption (\ref{eq020}), there are two constants $\ee_0, C>0$ such
that for any $t\in (0, \ee_0)$ we have \beq |\Phi(t, \bb)|\leq
Ct^{m+1}.\label{eq021}\eeq By equality (\ref{eq004}) and
(\ref{eq003}) we have \beqs \Big|\Phi(t, \bb)-\int_M\;\langle
\Xi_{t, \bb}, \Te\rangle^2\;\oo_{t, \bb}^n\Big|&\leq& C\,t\|\Xi_{t,
\bb}\| \cdot\|\langle   \ba_{t, \bb}, \Theta\rangle \|_{L^2(\oo_g)}\\
&\leq &C\,t\, \int_M\;\langle   \ba_{t, \bb},
\Theta\rangle^2\,\oo_{t, \bb}^n \eeqs where we used  (\ref{eq045})
in the last inequality. Thus, there is a constant $\ee_0>0$ such
that for any $t\in (0, \ee_0)$ we have
$$\int_M\;\langle   \ba_{t, \bb},
\Theta\rangle^2\,\oo_{t, \bb}^n\leq C\cdot \Phi(t, \bb)\leq C\cdot
t^{m+1}$$ and hence  \beq \sum_{i=1}^d \,c_i(t)^2=\int_M\;\langle
\Xi_{t, \bb}, \Te\rangle^2\;\oo_{g}^n\leq C \int_M\;\langle  \Xi_{t,
\bb}, \Te\rangle^2\;\oo_{t, \bb}^n\leq Ct^{m+1}.\eeq This implies
that for each $i$ when $t$ is small, $|c_i(t)|\leq Ct^{\frac
{m+1}{2}}.$ Since $\oo_{t, \bb}$ is a solution of (\ref{eq0010}), we
have
$$\|\s(\oo_{t, \bb})-c_0(t)\|_{C^k(M)}=\|\sum_{i=1}^d\,c_i(t)\te_i\|_{C^k(M)}
\leq Ct^{\frac {m+1}{2}}.$$ The corollary is proved.

\end{proof}

Now we want to compute the coefficients of $t$ in  the expansion of
the function $\Phi$. Let $\oo_g$ be a constant scalar curvature
metric on $M$ and $(\varphi_{t, \bb}, \td \Xi_{t, \bb})$ the
solution of (\ref{eq0010}). Since the operator
$$\LL_g: \cH_{g, k+4}^{\perp}\ri \cH_{g, k }^{\perp} $$
is self-adjoint and invertible, we  denote by $\GG_g=\LL_g^{-1}$ the
inverse operator of $\LL_g.$ Without loss of generality, we can
assume that $\bb$ is traceless with respect to the metric $g$.
Otherwise, we can consider the metric $(1+t\cdot\tr_g\bb)\oo_g$
which still has constant scalar curvature. Let  $\cH_{0}^{1, 1}(M)$
be the space of traceless harmonic $(1, 1)$ form with respect to the
metric $g$ on $M.$  Computing the first derivative of
$S(t):=s(\oo_{t, \bb})-\langle \td \Xi_{t, \bb}, \td \Te\rangle$
with respect to $t$, we have\\

\begin{lem}\label{lem004}For $\bb\in \cH_0^{(1, 1)}$, we have the following:
\beqs  \langle \td \ba'(0), \td \Te\rangle&=&-\td\Pi_g(R_{i\bar j}\bb_{j\bar i}),\\
 \varphi'(0)&=&-\GG_g\td\Pi_g^{\perp}(R_{i\bar j}\bb_{j\bar i}),\\ c_0'(0)&=&
 \frac 1{V_g}\int_M\;R_{i\bar j}\bb_{j\bar i}\,\oo_g^n,\eeqs where we write
$f'(t)=\pd {f}t$ for simplicity.
\end{lem}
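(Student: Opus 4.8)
The plan is to differentiate the defining equation $S(t) := s(\oo_{t,\bb}) - \langle \td\Xi_{t,\bb}, \td\Te\rangle = 0$ at $t = 0$ and read off the three claimed identities by projecting onto $\cH_g$ and $\cH_{g,k}^\perp$. First I would recall that at $(t,\varphi) = (0,0)$ we have $\varphi_{0,\bb} = 0$ and $\td\Xi_{0,\bb} = (s(\oo_g), 0, \ldots, 0)$, since $\oo_g$ already has constant scalar curvature; in particular $\langle \td\Xi_{0,\bb}, \td\Te\rangle = s(\oo_g)$. Then, using the expansion of the scalar curvature stated just before Theorem \ref{theo014},
\beq
s(\oo_{t,\varphi}) = s(\oo_g) - \big(\Delta_g^2\varphi + R_{i\bar j}\varphi_{j\bar i} + t\Delta_g\tr_\oo\bb + tR_{i\bar j}\bb_{j\bar i}\big) + Q_g(\Na^2\varphi, t\bb),
\nonumber\eeq
and recalling that $\bb$ is harmonic so $\tr_g\bb$ is constant (and equals $0$ by the traceless assumption, so the term $t\Delta_g\tr_\oo\bb$ drops out), I differentiate in $t$ at $t = 0$. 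Since $Q_g$ collects quadratic-and-higher terms in $(\Na^2\varphi, t\bb)$ and $\varphi_{t,\bb} = O(t)$ by Lemma \ref{lem2020}, the contribution of $Q_g$ to the first derivative vanishes. Writing $\varphi'(0) = \pd{\varphi_{t,\bb}}{t}\big|_{t=0}$, this yields
\beq
S'(0) = -\LL_g\varphi'(0) - R_{i\bar j}\bb_{j\bar i} - \langle \td\ba'(0), \td\Te\rangle = 0.
\nonumber\eeq

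Next I would apply the two projection operators $\td\Pi_g$ and $\td\Pi_g^\perp$ to this identity. Since $\varphi'(0) \in \cH_{g,k+4}^\perp$ and $\LL_g$ maps $\cH_{g,k+4}^\perp$ into $\cH_{g,k}^\perp$, we have $\td\Pi_g(\LL_g\varphi'(0)) = 0$; and $\langle \td\ba'(0), \td\Te\rangle$ lies in $\cH_g$ by construction, so it is fixed by $\td\Pi_g$ and killed by $\td\Pi_g^\perp$. Applying $\td\Pi_g$ to $S'(0) = 0$ therefore gives $\langle \td\ba'(0), \td\Te\rangle = -\td\Pi_g(R_{i\bar j}\bb_{j\bar i})$, the first claimed identity. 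Applying $\td\Pi_g^\perp$ gives $\LL_g\varphi'(0) = -\td\Pi_g^\perp(R_{i\bar j}\bb_{j\bar i})$, and since $\LL_g$ is invertible on $\cH_{g,k}^\perp$ with inverse $\GG_g$, this is $\varphi'(0) = -\GG_g\td\Pi_g^\perp(R_{i\bar j}\bb_{j\bar i})$, the second identity. Finally, $c_0'(0)$ is the $\te_0 = 1$ component of $\langle \td\ba'(0), \td\Te\rangle$, i.e. $c_0'(0) = \langle 1, -\td\Pi_g(R_{i\bar j}\bb_{j\bar i})\rangle_{L^2(\oo_g)} = \langle 1, -R_{i\bar j}\bb_{j\bar i}\rangle_{L^2(\oo_g)} = \frac{1}{V_g}\int_M R_{i\bar j}\bb_{j\bar i}\,\oo_g^n$ up to sign; here I should double-check the sign convention, noting that $c_0(t)$ is the average scalar curvature of $[\oo_t]$ and that $\int_M R_{i\bar j}\bb_{j\bar i}\,\oo_g^n$ (together with a Laplacian term that integrates to zero) is, up to a dimensional constant, the derivative of $\int_M s(\oo_t)\,\oo_t^n / \int_M \oo_t^n$ — this cohomological computation is the natural cross-check and should reconcile any discrepancy.

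The steps are essentially routine once the setup is in place; the only genuine care is needed in (i) justifying that $\varphi_{t,\bb}$ is differentiable in $t$ at $t = 0$ with $\varphi_{0,\bb} = 0$ — this follows from the smoothness statement in Lemma \ref{lem2020} together with uniqueness of the solution and the fact that $\varphi = 0$ solves the equation at $t = 0$ — and (ii) confirming that the quadratic remainder $Q_g$ truly does not contribute at first order, which uses $\varphi_{t,\bb} = O(t)$ from the estimate \eqref{eq012}. I expect the main (minor) obstacle to be bookkeeping the sign and normalization conventions consistently between the expansion of $s(\oo_{t,\varphi})$, the definition of $S(t)$, and the $L^2$-normalized inner product $\langle\cdot,\cdot\rangle_{L^2(\oo_g)}$ with its $1/V_g$ factor; the cohomological formula for $\un\s(t)$ provides an independent verification of the $c_0'(0)$ expression.
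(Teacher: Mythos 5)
Your proposal is correct and follows essentially the same route as the paper: differentiate $S(t)=0$ at $t=0$, discard $Q_g$ at first order, and apply $\td\Pi_g$ and $\td\Pi_g^{\perp}$ (for $c_0'(0)$ the paper carries out exactly the cohomological computation you propose as a cross-check, using that $c_0(t)=\un\s(t)$ depends only on $[\oo_g+t\bb]$ and that $V_t'=0$ since $\tr_g\bb=0$). The sign ambiguity you flag is real --- the $\te_0$-component of the first identity forces $c_0'(0)=-\frac 1{V_g}\int_M R_{i\bar j}\bb_{j\bar i}\,\oo_g^n$, opposite to the sign displayed in the lemma --- but this is a slip in the paper's statement rather than in your argument, and it is harmless downstream since only the square of $\td\Pi_g(R_{i\bar j}\bb_{j\bar i})$ enters Corollary \ref{cor012}.
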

\begin{proof} Since $ S(t)=0$ for $t\in (0, \ee_0)$,  we have
 \beq 0=  S'(t)=-\Delta_{t}^2\varphi'(t)-R_{i\bar
j}(t)\varphi'_{j\bar i}(t) -R_{i\bar j}(t)\bb_{j\bar i}-\langle \td
\ba'_{t, \bb}(t), \td \Te\rangle. \label{eq007}\eeq Projecting to
the space $\cH_g$ when $t=0$ we have
$$0=\td\Pi_g(  S'(t))(0)=-\td\Pi_g(R_{i\bar j}\bb_{j\bar i})-\langle \td \ba_{t, \bb}'(0),
\td \Te\rangle,$$  which implies that
$$\langle \td \ba_{t, \bb}'(0),
\td \Te\rangle=-\td\Pi_g(R_{i\bar j}\bb_{j\bar i}).$$  On the other
hand, we project (\ref{eq007}) to the space $\cH_{g, k }^{\perp} $
and we have
$$0=\td\Pi_g^{\perp}( S'(t))(0)=-\LL_g \varphi'(0)-\td \Pi_g^{\perp}(R_{i\bar j}\bb_{j\bar i}).$$
This together with $\varphi'(0)\in\cH_{g, k+4 }^{\perp}$ implies
that
$$\varphi'(0)=-\GG_g\td\Pi_g^{\perp}(R_{i\bar j}\bb_{j\bar i}).$$
Now we calculate $c_0'(t).$ Note that $c_0(t)$ only depends on the
K\"ahler class $[\oo_g+t\bb]$, we compute it using the metric
$\oo_t=\oo_g+t\bb.$ Since $\pd
{}t\oo_t^n\Big|_{t=0}=\tr_{\oo_g}\bb\;\oo_g^n=0,$ we have $V_t'=0$
and
$$c_0'(0)=\frac 1{V_g}\int_M\;\pd {}ts(\oo_t) \Big|_{t=0}\,\oo_g^n=
\frac 1{V_g}\int_M\; R_{i\bar j}\bb_{j\bar i}\,\oo_g^n.$$ The lemma is proved.\\

\end{proof}

\begin{cor}\label{cor012} If $\bb\in \cH_0^{1, 1}(M)$, then the function $\Phi$
can be expanded as \beq \Phi(t, \bb)=t^2\int_M\;(\Pi_g(R_{i\bar
j}\bb_{j\bar i}))^2+O(t^3).\label{eq041}\eeq
\end{cor}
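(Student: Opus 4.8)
The plan is to compute the Taylor expansion of $\Phi(t,\bb)$ at $t=0$ directly from the formula $\Phi(t,\bb)=\int_M \langle\Xi_{t,\bb},\Te\rangle^2\,\oo_{t,\bb}^n + O(t\|\ba_{t,\bb}\|^2)$ that emerged in the proof of Lemma \ref{lem003}. More precisely, from equation (\ref{eq004}) together with the estimate (\ref{eq003}) we get
\beqs
\Phi(t,\bb)=\int_M \langle \Xi_{t,\bb},\Te\rangle^2\,\oo_{t,\bb}^n + E(t),\qquad |E(t)|\le C\,t\,\|\ba_{t,\bb}\|\cdot\|\langle\ba_{t,\bb},\Theta\rangle\|_{L^2(\oo_g)},
\eeqs
so that $\Phi(t,\bb) = \|\Xi_{t,\bb}\|^2_{L^2(\oo_g)} + O(t\|\ba_{t,\bb}\|^2) + (\text{volume correction})$, where the volume correction comparing $\oo_{t,\bb}^n$ to $\oo_g^n$ is itself $O(t\|\ba_{t,\bb}\|^2)$ since $\|\varphi_{t,\bb}\|_{W^{2,k+4}}\le C\ee_0$. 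Hence $\Phi(t,\bb)=\|\Xi_{t,\bb}\|^2 + O(t\|\ba_{t,\bb}\|^2)$.

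Next I would feed in Lemma \ref{lem004}. That lemma computes $\langle\td\ba'(0),\td\Te\rangle = -\td\Pi_g(R_{i\bar j}\bb_{j\bar i})$; removing the constant component $c_0'(0)$ (which is $\frac{1}{V_g}\int_M R_{i\bar j}\bb_{j\bar i}\,\oo_g^n$, exactly the average needed so that $\Pi_g = \td\Pi_g - (\text{projection onto }\te_0)$), we obtain $\langle\Xi'(0),\Te\rangle = -\Pi_g(R_{i\bar j}\bb_{j\bar i})$. Since $\Xi_{0,\bb}=0$ (at $t=0$ the metric $\oo_g$ already has constant scalar curvature, so the vector $\td\Xi_{0,\bb}$ has only its $c_0$ entry nonzero), Taylor expansion gives $\Xi_{t,\bb}=t\,\Xi'(0)+O(t^2)$, hence $\|\Xi_{t,\bb}\|^2 = t^2\|\Xi'(0)\|^2 + O(t^3) = t^2\|\Pi_g(R_{i\bar j}\bb_{j\bar i})\|^2_{L^2(\oo_g)}+O(t^3)$. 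Because $\{\te_1,\dots,\te_d\}$ is $L^2(\oo_g)$-orthonormal, $\|\Xi'(0)\|^2$ (Euclidean norm on $\RR^d$) equals $\int_M (\Pi_g(R_{i\bar j}\bb_{j\bar i}))^2\,\oo_g^n$. Combining with $\|\ba_{t,\bb}\| = O(t)$ (which follows from $\ba_{0,\bb}=0$ and smoothness, or from (\ref{eq045}) and the bound just obtained), the error term $O(t\|\ba_{t,\bb}\|^2)=O(t^3)$, and we conclude $\Phi(t,\bb)=t^2\int_M(\Pi_g(R_{i\bar j}\bb_{j\bar i}))^2\,\oo_g^n + O(t^3)$, which is (\ref{eq041}).

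The main point to be careful about, rather than a genuine obstacle, is bookkeeping the relation between the two projections $\td\Pi_g$ (onto $\{\te_0,\dots,\te_d\}$) and $\Pi_g$ (onto $\{\te_1,\dots,\te_d\}$) and the two vectors $\td\ba_{t,\bb}=(c_0,\dots,c_d)$ and $\ba_{t,\bb}=(c_1,\dots,c_d)$: one must verify that the $c_0$-component drops out of $\Phi$ entirely, which it does because $\Phi$ is built from $X_{t,\bb}=\sum_{k=1}^d c_k X_k$ with the sum starting at $k=1$, so only $\ba_{t,\bb}$ (not $c_0$) enters. A secondary technical point is justifying that the various $O(\cdot)$ remainders are uniform and that $t\mapsto(\varphi_{t,\bb},\td\ba_{t,\bb})$ is smooth down to $t=0^+$, so that the Taylor expansions are legitimate; this is exactly the content of the implicit function theorem application in Lemma \ref{lem2020} applied on an open interval $(-\ee,\ee)$. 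With those points settled, the expansion is immediate from Lemmas \ref{lem2020}, \ref{lem003}, and \ref{lem004}.
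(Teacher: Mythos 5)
Your proposal is correct and reaches the right expansion, but it is organized differently from the paper's proof, and one justification you cite is weaker than what you actually need. The paper computes the Taylor coefficients of $\Phi$ exactly: it notes $\Phi'(0)=0$ (since $\te_{t,\bb}(0)=\Xi_{t,\bb}(0)=0$), then evaluates $\Phi''(0)=2\int_M\te_{t,\bb}'(0)\langle\ba_{t,\bb}'(0),\Theta\rangle\,\oo_g^n$ and identifies $\te_{t,\bb}'(0)=\langle\ba_{t,\bb}'(0),\Theta\rangle$ by differentiating the defining relation $i_{X_{t,\bb}}\oo_{t,\bb}=\i\bar\p\te_{t,\bb}$ at $t=0$; Lemma \ref{lem004} then supplies the coefficient. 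You instead reuse the quantitative estimate (\ref{eq003}) from the proof of Lemma \ref{lem003} to replace $\te_{t,\bb}$ by $\langle\ba_{t,\bb},\Theta\rangle$ up to an error $O(t\|\ba_{t,\bb}\|^2)$, and then Taylor-expand $\|\Xi_{t,\bb}\|^2$; this shortens the computation (no need for $\te_{t,\bb}'(0)$) at the price of controlling two remainders. The one point where your justification as written is insufficient: for the volume correction $\int_M\langle\Xi_{t,\bb},\Te\rangle^2(\oo_{t,\bb}^n-\oo_g^n)$ you invoke only $\|\varphi_{t,\bb}\|_{W^{2,k+4}}\le C\ee_0$, which yields $O(\ee_0 t^2)$ --- not absorbable into $O(t^3)$. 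What you need (and implicitly use elsewhere) is that $t\mapsto\varphi_{t,\bb}$ is smooth with $\varphi_{0,\bb}=0$, hence $\|\varphi_{t,\bb}\|=O(t)$ and $\oo_{t,\bb}^n-\oo_g^n=O(t)$, making the correction $O(t)\cdot\|\Xi_{t,\bb}\|^2=O(t^3)$; the same smoothness gives $\|\ba_{t,\bb}\|=O(t)$, so the error from (\ref{eq003}) is $O(t^3)$ as you claim. With that single repair the argument is complete and agrees with (\ref{eq041}); your bookkeeping of $\td\Pi_g$ versus $\Pi_g$ and of the dropped $c_0$-component matches the paper's.
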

\begin{proof}
Since $\te_{t, \bb}(0)=0$ and $\Xi_{t, \bb}(0)=0,$ we have
$\Phi'(0)=0.$ Direct calculation shows   \beq
\Phi''(0)=\int_M\;2\te_{t, \bb}'(0)\langle \ba_{t, \bb}'(0),
  \Theta\rangle\,\oo_g^n.\label{eq048}\eeq
Taking the derivative with respect to $t$, we have \beq \i\bar \p
\te_{t, \bb}'(0)=\Big(i_{X_{t, \bb}' }\oo_{t, \bb}+i_{X_{t,
\bb}}\oo_{t,
\bb}'\Big)\Big|_{t=0}=\sum_{k=1}^d\,c_k'(0)\,i_{X_k}\oo_g=\i\sum_{k=1}^d\,c_k'(0)\bar
\p\theta_k,\nonumber\eeq which implies that \beq \te_{t,
\bb}'(0)=\sum_{k=1}^d\;c_k'(0)\theta_k=\langle \ba_{t, \bb}'(0),
  \Theta\rangle.\label{eq0021}\eeq
This together with the equality (\ref{eq048}) and Lemma \ref{lem004}
implies that
$$\Phi''(0)=2\int_M\;(\langle \ba_{t, \bb}'(0),
  \Theta\rangle)^2\,\oo_g^n=2\int_M\;(\Pi_g(R_{i\bar j}\bb_{j\bar i}))^2.$$
The corollary is proved.

\end{proof}

If $\oo_g$ is a K\"ahler-Einstein metric, the first term of the
right hand side of (\ref{eq041}) automatically vanishes. In this
case, it is not difficult to expand $\Phi(t, \bb)$ for more terms.

\begin{lem}\label{lem005}If $\bb\in \cH_0^{1, 1}(M)$  and
 satisfies $R_{i\bar j}\bb_{j\bar i}=0,$ then we have \beqs
 \langle \td \ba''_{t, \bb}(0), \td
\Te\rangle&=&\td\Pi_g(2R_{i\bar j}\bb_{j\bar k}\bb_{k\bar i}),\\
\varphi''(0)&=&\GG_g\td\Pi_g^{\perp}(2R_{i\bar j}\bb_{j\bar
k}\bb_{k\bar
i}),\\
c_0''(0)&=&\frac 1{V_g}\int_M\;2R_{i\bar j}\bb_{j\bar k}\bb_{k\bar
i}\,\oo_g^n.\eeqs
\end{lem}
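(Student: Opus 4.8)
The plan is to repeat the strategy of Lemma~\ref{lem004}, but now differentiating $S(t)=s(\oo_{t,\bb})-\langle\td\Xi_{t,\bb},\td\Te\rangle$ twice at $t=0$ and feeding in the information $\bb\in\cH_0^{1,1}(M)$ together with the new hypothesis $R_{i\bar j}\bb_{j\bar i}=0$. The key point of that extra hypothesis is that, by Lemma~\ref{lem004}, it forces $\langle\td\ba'(0),\td\Te\rangle=0$, $\varphi'(0)=-\GG_g\td\Pi_g^\perp(R_{i\bar j}\bb_{j\bar i})=0$, and $c_0'(0)=0$. So all first-order quantities vanish, which will considerably simplify the second-order identity.

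First I would write out $S''(t)$ explicitly. Differentiating the identity $S'(t)=-\Delta_t^2\varphi'(t)-R_{i\bar j}(t)\varphi'_{j\bar i}(t)-R_{i\bar j}(t)\bb_{j\bar i}-\langle\td\ba'_{t,\bb}(t),\td\Te\rangle=0$ from \eqref{eq007} once more in $t$, one gets a sum of terms: $-\LL_g\varphi''(0)$ (the only surviving linear-in-$\varphi''$ term), a term $-\langle\td\ba''(0),\td\Te\rangle$, terms involving $\varphi'(0)$ and the $t$-derivatives of $\Delta_t$ and $R_{i\bar j}(t)$ applied to $\varphi'(0)$ (all of which drop out since $\varphi'(0)=0$), and the term $-\p_t(R_{i\bar j}(t)\bb_{j\bar i})|_{t=0}$. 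The latter is the only genuinely new computation: one needs the first variation of the Ricci form in the direction $\dot\oo=t\bb$ (here $\varphi'(0)=0$ so no $\pbp\varphi'$ contribution). A standard formula gives $\p_t R_{i\bar j}(t)|_{t=0}$ in terms of $\bb$, and when contracted against $\bb_{j\bar i}$ and simplified using that $\bb$ is harmonic (hence $\Na$-parallel in the relevant traces) and traceless, this produces exactly $2R_{i\bar j}\bb_{j\bar k}\bb_{k\bar i}$, up to the sign bookkeeping; I would verify this by a local normal-coordinate computation. This is the step I expect to be the main obstacle, since it requires care with the variation-of-curvature formula and with which index contractions vanish by harmonicity.

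Having established $S''(0)=-\LL_g\varphi''(0)-\langle\td\ba''(0),\td\Te\rangle+2R_{i\bar j}\bb_{j\bar k}\bb_{k\bar i}=0$, I would project onto $\cH_g$ and onto $\cH_{g,k}^\perp$ exactly as in Lemma~\ref{lem004}. Since $\LL_g\varphi''(0)\in\cH_{g,k}^\perp$ and $\langle\td\ba''(0),\td\Te\rangle\in\cH_g$, the $\cH_g$-projection gives
$$\langle\td\ba''_{t,\bb}(0),\td\Te\rangle=\td\Pi_g(2R_{i\bar j}\bb_{j\bar k}\bb_{k\bar i}),$$
and the $\cH_{g,k}^\perp$-projection gives $\LL_g\varphi''(0)=\td\Pi_g^\perp(2R_{i\bar j}\bb_{j\bar k}\bb_{k\bar i})$, whence $\varphi''(0)=\GG_g\td\Pi_g^\perp(2R_{i\bar j}\bb_{j\bar k}\bb_{k\bar i})$ using $\varphi''(0)\in\cH_{g,k+4}^\perp$.

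Finally, for $c_0''(0)$ I would argue as at the end of Lemma~\ref{lem004}: $c_0(t)$ depends only on the class $[\oo_g+t\bb]$, so compute with $\oo_t=\oo_g+t\bb$. One has $V_t=\int_M\oo_t^n$ with $V_t'(0)=\int_M\tr_{\oo_g}\bb\,\oo_g^n=0$ and $V_t''(0)=\int_M n(n-1)(\text{quadratic in }\tr\bb)\cdots$, which vanishes as well since $\tr_g\bb=0$; then $c_0(t)=\frac1{V_t}\int_M s(\oo_t)\,\oo_t^n$, and differentiating twice at $t=0$, the cross terms with $V_t'$ and $\p_t\oo_t^n$ drop out, leaving $c_0''(0)=\frac1{V_g}\int_M \p_t^2 s(\oo_t)|_{t=0}\,\oo_g^n$. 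But integrating $S''(0)=0$ over $M$ (the $\LL_g\varphi''(0)$ and $\langle\td\ba''(0),\td\Te\rangle$ terms integrate against $\te_0=1$ in a controlled way, and in fact $\int_M\p_t^2 s(\oo_t)\,\oo_g^n=\int_M 2R_{i\bar j}\bb_{j\bar k}\bb_{k\bar i}\,\oo_g^n$ once the variation-of-curvature term is identified), one obtains $c_0''(0)=\frac1{V_g}\int_M 2R_{i\bar j}\bb_{j\bar k}\bb_{k\bar i}\,\oo_g^n$, as claimed. This completes the proof modulo the curvature-variation computation flagged above.
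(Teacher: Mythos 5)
Your proposal follows essentially the same route as the paper: differentiate $S(t)$ twice, use $R_{i\bar j}\bb_{j\bar i}=0$ together with Lemma~\ref{lem004} to kill all first-order quantities, identify the surviving quadratic term $2R_{i\bar j}\bb_{j\bar k}\bb_{k\bar i}$ coming from $\p_t\big(R_{i\bar j}(t)\bb_{j\bar i}\big)\big|_{t=0}$ (note it is the variation of the two inverse metrics in the contraction that contributes, the variation of the Ricci tensor itself being $-\p\bar\p\,\tr_g\bb=0$ for harmonic $\bb$), and then project onto $\cH_g$ and $\cH_{g,k}^{\perp}$. The only slip is your claim that $V_t''(0)=0$: the second-order coefficient of $\oo_t^n/\oo_g^n$ is $\sigma_2(\bb)=-\tfrac 12|\bb|_g^2\neq 0$ for traceless $\bb\neq 0$, but this is harmless because the two $V''(0)$-contributions to $c_0''(0)$ cancel (using that $s(\oo_g)$ is constant and $s'(0)=0$), so your final formula for $c_0''(0)$ still stands.
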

\begin{proof}
Following the proof of Lemma \ref{lem004} we have \beqs  S''(t)
&=&(\bb_{i\bar j}+\varphi'_{i\bar j})(\Delta_t \varphi')_{j\bar
i}+\Delta_t((\bb_{i\bar j}+\varphi'_{i\bar j}) \varphi'_{j\bar
i})-\Delta_t^2 \varphi''\\&&+(\Delta_t\varphi')_{i\bar
j}\varphi'_{j\bar i}-R_{i\bar j}(t) \varphi''_{j\bar i}+2R_{i\bar
j}\varphi'_{j\bar k}(\bb+\Na^2\varphi')_{k\bar
i}+(\Delta\varphi')_{i\bar j}\bb_{j\bar i}\\&&+R_{i\bar j}\bb_{j\bar
k}(\bb+\Na^2D_t\varphi)_{k\bar i}+R_{i\bar j}\bb_{k\bar
i}(\bb+\Na^2D_t\varphi)_{j\bar k}-\langle \td \ba''_{t, \bb}, \td
\Te\rangle.\eeqs Thus, projecting to $\cH_g$ and $\cH_{g, k
}^{\perp}$ we have \beqs 0&=&\td\Pi_g(
 S''(t))(0)=\td\Pi_g(2R_{i\bar j}\bb_{j\bar k}\bb_{k\bar
i})-\langle \td \ba''_{t, \bb}(0), \td \Te\rangle,\\
0&=&\td\Pi_g^{\perp}(  S''(t))(0)=-\LL_g\varphi''+\td
\Pi_g^{\perp}(2R_{i\bar j}\bb_{j\bar k}\bb_{k\bar i}). \eeqs
Moreover, we calculate $c_0''(0)$ as in the proof of Lemma
\ref{lem004}
$$c_0''(0)=\frac
1{V_g}\int_M\;R(\oo_g+t\bb)''\Big|_{t=0}\;\oo_g^n=\frac 1{V_g}
\int_M\;2R_{i\bar j}\bb_{j\bar k}\bb_{k\bar i}\,\oo_g^n.$$ The lemma
is proved.

\end{proof}

\begin{cor}\label{cor014}
If $\oo_g$ is a K\"ahler-Einstein metric  and  $\bb\in \cH_0^{1,
1}(M)$, then we have
$$\Phi(t, \bb)=t^4\int_M\;(\Pi_g(R_{i\bar j}\bb_{j\bar k}\bb_{k\bar
i}))^2\,\oo_g^n+O(t^5).$$
\end{cor}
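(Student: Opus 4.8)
To prove Corollary \ref{cor014}, the plan is to Taylor expand $\Phi(t,\bb)$ at $t=0$, show it vanishes to order four, and read off the coefficient of $t^4$. The starting point is that since $\oo_g$ is K\"ahler-Einstein, $R_{i\bar j}=\la g_{i\bar j}$ for a constant $\la$, so $R_{i\bar j}\bb_{j\bar i}=\la\,\tr_g\bb=0$ because $\bb\in\cH_0^{1,1}(M)$ is traceless; this is precisely the hypothesis of Lemma \ref{lem005}. It also makes every first-order quantity in Lemma \ref{lem004} vanish: $\varphi'(0)=0$, $\langle\td\ba'_{t,\bb}(0),\td\Te\rangle=0$ (so $c_k'(0)=0$ for all $k$, by orthonormality of the $\te_i$), and $c_0'(0)=0$. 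Hence by the identity (\ref{eq0021}), $\te_{t,\bb}'(0)=\langle\ba_{t,\bb}'(0),\Te\rangle=0$ as well.

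Next I would differentiate the expression (\ref{eq004}), $\Phi(t,\bb)=\int_M\te_{t,\bb}\,\langle\ba_{t,\bb},\Te\rangle\,\oo_{t,\bb}^n$. Writing $\oo_{t,\bb}^n=\rho(t)\,\oo_g^n$ with $\rho(0)=1$ and $F(t)=\te_{t,\bb}\,\langle\ba_{t,\bb},\Te\rangle$: both factors of $F$, namely $\te_{t,\bb}$ and $\langle\ba_{t,\bb},\Te\rangle$, vanish at $t=0$, and by the previous step so do their first $t$-derivatives, so $F(0)=F'(0)=F''(0)=F'''(0)=0$ and $F^{(4)}(0)=6\,\te_{t,\bb}''(0)\,\langle\ba_{t,\bb}''(0),\Te\rangle$. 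Because $F$ vanishes to order four and $\rho(0)=1$, the function $\Phi(t,\bb)$ vanishes to order four too and $\Phi^{(4)}(0)=\int_M F^{(4)}(0)\,\oo_g^n=6\int_M\te_{t,\bb}''(0)\,\langle\ba_{t,\bb}''(0),\Te\rangle\,\oo_g^n$, the $\rho$-derivative terms dropping out since each multiplies a lower $t$-derivative of $F$.

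Then I would identify $\te_{t,\bb}''(0)$. Differentiating $\i\bar\p\te_{t,\bb}=i_{X_{t,\bb}}\oo_{t,\bb}$ twice at $t=0$, using $X_{t,\bb}(0)=0$ and $X_{t,\bb}'(0)=\sum_k c_k'(0)X_k=0$, leaves $\i\bar\p\te_{t,\bb}''(0)=i_{X_{t,\bb}''(0)}\oo_g=\i\sum_{k=1}^d c_k''(0)\bar\p\te_k$, so the normalization (\ref{eq301}) gives $\te_{t,\bb}''(0)=\sum_{k=1}^d c_k''(0)\te_k=\langle\ba_{t,\bb}''(0),\Te\rangle$ --- the second-order analogue of (\ref{eq0021}). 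By Lemma \ref{lem005}, $\langle\td\ba_{t,\bb}''(0),\td\Te\rangle=\td\Pi_g(2R_{i\bar j}\bb_{j\bar k}\bb_{k\bar i})$; applying the projection $\Pi_g$ to both sides (which kills the constant component) gives $\te_{t,\bb}''(0)=\langle\ba_{t,\bb}''(0),\Te\rangle=2\,\Pi_g(R_{i\bar j}\bb_{j\bar k}\bb_{k\bar i})$.

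Substituting this into the formula for $\Phi^{(4)}(0)$ yields $\Phi^{(4)}(0)=6\int_M\big(\te_{t,\bb}''(0)\big)^2\,\oo_g^n=24\int_M\big(\Pi_g(R_{i\bar j}\bb_{j\bar k}\bb_{k\bar i})\big)^2\,\oo_g^n$, hence $\Phi(t,\bb)=\tfrac1{24}\Phi^{(4)}(0)\,t^4+O(t^5)=t^4\int_M\big(\Pi_g(R_{i\bar j}\bb_{j\bar k}\bb_{k\bar i})\big)^2\,\oo_g^n+O(t^5)$, which is the claim. I expect the only real obstacle to be bookkeeping: carefully tracking which of $\te_{t,\bb}$, $\langle\ba_{t,\bb},\Te\rangle$, $\varphi_{t,\bb}$, $X_{t,\bb}$ and their first $t$-derivatives vanish at $t=0$, so that $\Phi^{(4)}(0)$ really does collapse to the single clean term, and handling the normalization correctly when passing (\ref{eq0021}) to second order. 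All the substantive analytic input is already provided by Lemmas \ref{lem004} and \ref{lem005}.
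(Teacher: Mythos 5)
Your proposal is correct and follows essentially the same route as the paper: use the K\"ahler--Einstein hypothesis to kill all first-order quantities via Lemma \ref{lem004}, establish the second-order analogue $\te_{t,\bb}''(0)=\langle \Xi_{t,\bb}''(0),\Te\rangle=2\Pi_g(R_{i\bar j}\bb_{j\bar k}\bb_{k\bar i})$ from Lemma \ref{lem005}, and read off $\Phi^{(4)}(0)=24\int_M(\Pi_g(R_{i\bar j}\bb_{j\bar k}\bb_{k\bar i}))^2\,\oo_g^n$ by the Leibniz rule. Your explicit bookkeeping of why the volume-form derivatives drop out is a detail the paper leaves implicit, but it is the same argument.
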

\begin{proof}By Lemma \ref{lem004}, we have
$$\langle \td \Xi_{t, \bb}'(0), \td \Te\rangle=\varphi'(0)=c_0'(0)=0.$$
Thus, the equality (\ref{eq0021}) implies that $\te'_{t, \bb}(0)=0$
and by direct calculation we have \beqs
\Phi_t'''(0)&=&3\int_M\;\Big(\te_{t, \bb}''(0) \langle \ba_{t,
\bb}'(0),
  \Theta\rangle+\te_{t, \bb}'(0)\langle  \ba_{t, \bb}''(0),
  \Theta\rangle\Big)\;\oo_g^n=0. \eeqs
On the other hand, by Lemma \ref{lem005} we have \beq \i\bar
\p\te''_{t, \bb}(0)=i_{X_{t,
\bb}''(0)}\oo_{g}=\sum_{k=1}^d\,c_k''(0)i_{X_k}\oo_g=\i\bar
\p\Big(\sum_{k=1}^d\,c_k''(0)\theta_k\Big), \eeq which implies that
\beq \te''_{t, \bb}(0)=\langle \Xi_{t, \bb}''(0),
\Te\rangle.\label{eq005}\eeq Thus, by tedious calculation we have
$$\Phi^{(4)}_t(0)=6\int_M\;\te_{t, \bb}''(0) \langle \ba_{t,
\bb}''(0),
  \Theta\rangle\;\oo_g^n=24\int_M\;(\Pi_g(R_{i\bar j}\bb_{j\bar k}\bb_{k\bar
i}))^2\,\oo_g^n.$$ The corollary is proved.

\end{proof}

\subsection{Varying complex
structures}\label{Sec002}

In this section, we will consider the deformation of   constant
scalar curvature metrics when the complex structure varies. Let $(M,
J, g, \oo_g)$ be a compact K\"ahler manifold $(M, J)$ with a
K\"ahler metric $g$ and the associate K\"ahler form $\oo_g.$ Let
$J_t$ be a smooth family of complex  structures with $J_0=J$. By
Kodaira's theorem in \cite{[Kod1]} there exists a smooth family of
K\"ahler metric $g_t$ with $g_0=g $ which is compatible with the
complex structure $J_t$ for small $t$. Let $\oo_t$ be the associate
K\"ahler form of $g_t$ with respect to the complex structure $J_t$.
The triple $(J_t, g_t, \oo_t )$ is called a complex deformation of
$(J, g, \oo_g).$ Given a complex deformation $(J_t, g_t, \oo_t )$,
we want to know whether there exists a constant scalar curvature
metric in the K\"ahler class $([\oo_t], J_t)$ if  we assume that
$\oo_g$ is a constant scalar curvature metric on $(M,
J)$. \\

 Since $g$ is a constant
scalar curvature metric, the identity component $G$ of the isometry
group of $(M, g)$ is a maximal compact subgroup of $\Aut(M, g)$ by
Lichnerowicz-Matsushima theorem. In general the action of the group
$G$ may not extend to  $(M, J_t)$. We follow the idea of
Rollin-Simanca-Tipler in \cite{[RST]} to assume that a compact
connected subgroup $G'$ of $G$ can extend to $(M, J_t)$ and $G'$
acts holomorphically on the complex deformation $(J_t, g_t, \oo_t)$.
We denote by $\cB_{G'}$  the space of  complex deformations $(J_t,
g_t, \oo_t)$ which allow the holomorphic action of $G'$. We denote
by $W^{2, k}_{G'}(M)$ the subspace of $G'$-invariant functions in
$W^{2, k}(M)$ and
 $\cU$  a neighborhood
of the origin in $W^{2, k}_{G'}(M)$.
 For any $\varphi\in \cU$, we
compute the expansion of the scalar curvature of the metric $\oo_{t,
\varphi}=\oo_t+\pt\varphi$ at $(t, \varphi)=(0, 0):$

\begin{lem}\label{lem009} Suppose that $\p \oo_t/\p t=\eta_t.$ We
have
$$\s(\oo_{t, \varphi})=\s(\oo_g)-\LL_g\varphi-t\Big(\Delta_g\tr_{\oo_{g}}(\eta
+S(\varphi))+R_{i\bar j}(\eta+S(\varphi))_{j\bar i} +\tr_g (S
\log\det g)\Big)+Q,$$ where $Q$ collects all the higher order terms
and the operator $S$ is given by $S=\frac 12dJ_t'(0)df$.
\end{lem}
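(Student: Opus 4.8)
\noindent\emph{Proof (sketch).} The plan is to Taylor expand $s(\oo_{t,\varphi})$ to first order in $t$ with $\varphi$ kept small, isolating the two sources of $t$-dependence in $\oo_{t,\varphi}=\oo_t+\pt\varphi$: the moving background form $\oo_t$, and the $J_t$-dependent operator $\pt$ applied to $\varphi$. At $t=0$ one has $J_0=J$ and $\oo_{0,\varphi}=\oo_g+\pbp\varphi$, so the $t^{0}$-part of the expansion is the classical linearization of the K\"ahler scalar curvature at the cscK metric $\oo_g$: it equals $s(\oo_g)-\LL_g\varphi$ plus terms of order $\varphi^{2}$ and higher, which we absorb into $Q$. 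It then remains to compute $\partial_t\big|_{t=0}s(\oo_{t,\varphi})$, and in fact only modulo terms nonlinear in $\varphi$, since those (times $t$) also go into $Q$.

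First I would note that, since $\pt$ is built from $J_t$, differentiating it in $t$ and applying the result to the $t$-independent function $\varphi$ gives $\partial_t\big|_{t=0}\big(\pt\varphi\big)=S(\varphi)$, where $S(f)=\tfrac{1}{2}\,d\big(J_t'(0)\,df\big)$ is the operator in the statement; this is just the $dd^{c}$-type identity expressing $\sqrt{-1}\,\partial_J\bar\partial_J f$ through $d$, $J$ and $df$ (cf.\ \cite{[RST]}\cite{[LB1]}), differentiated in $t$. Hence
$$\dot\Om:=\partial_t\big|_{t=0}\oo_{t,\varphi}=\eta+S(\varphi),$$
a real $(1,1)$-form with respect to $J_0$.

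Next I would differentiate $s(\oo_{t,\varphi})=\tr_{\oo_{t,\varphi}}\rho_t$ at $t=0$, where $\rho_t=-\pt\log\det g_t$ is the Ricci form of $(J_t,\oo_{t,\varphi})$ written in a $t$-smooth family of $J_t$-holomorphic charts. Three contributions appear: \emph{(i)} from the variation of the trace operator, using $\partial_t g^{i\bar j}|_{0}=-(\eta+S(\varphi))^{i\bar j}$, one gets the Ricci-trace term $-R_{i\bar j}(\eta+S(\varphi))_{j\bar i}$; \emph{(ii)} from the variation of the volume density $\log\det g_t$ one gets $-\pbp\,\tr_{\oo_g}(\eta+S(\varphi))$ in $\dot\rho$, hence $-\Delta_g\tr_{\oo_g}(\eta+S(\varphi))$ after taking $\tr_{\oo_g}$; \emph{(iii)} from the variation of the operator $\pt$ inside $\rho_t$, the identity of the first step adds $-S(\log\det g)$ to $\dot\rho$, hence $-\tr_g(S\log\det g)$ after taking $\tr_{\oo_g}$. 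Summing \emph{(i)}--\emph{(iii)} and multiplying by $t$ gives precisely the displayed coefficient of $-t$; the remaining terms, being $O(t^{2})$, $O(\varphi^{2})$ or mixed $O(t\varphi^{2})$-and-higher, go into $Q$.

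The metric-variation half of the last step — the $-\Delta_g\tr_{\oo_g}(\cdot)$ and $-R_{i\bar j}(\cdot)_{j\bar i}$ terms — is just the classical first variation of the scalar curvature of a K\"ahler metric and is routine. The main obstacle is the complex-structure half: establishing the formula for the $t$-derivative of $\pt$ and, relatedly, making sense of $S(\log\det g)$, since $\log\det g_t$ is defined only chart by chart and the charts themselves move with $t$. I would handle this by fixing a smoothly $t$-varying family of $J_t$-holomorphic coordinate systems and differentiating in $t$ everything in sight, including the coordinate vector fields — which is precisely where $J_t'(0)$, hence $S$, enters $\dot\rho$ — and then checking that the resulting local formula for $\partial_t s$ is chart-independent, as it must be since $s$ is intrinsic, so that it patches to the claimed global identity; alternatively one can bypass the charts by invoking the known invariant formula for the first variation of scalar curvature under a simultaneous deformation of metric and complex structure (cf.\ \cite{[LB1]}\cite{[RST]}). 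As a consistency check, for $J_t\equiv J$ one has $S\equiv 0$ and the formula collapses to the fixed-complex-structure expansion established earlier in this section.
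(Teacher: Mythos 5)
Your proposal is correct and takes essentially the same route as the paper: both hinge on the identity $\pt=\frac12 dJ_td$, from which $\partial_t(\pt\varphi)=S(\varphi)$ and hence $\partial_t\oo_{t,\varphi}=\eta+S(\varphi)$, and then read off the first variation of the scalar curvature in $t$ (producing exactly the three terms $-R_{i\bar j}(\eta+S(\varphi))_{j\bar i}$, $-\Delta_g\tr_{\oo_g}(\eta+S(\varphi))$, $-\tr_g(S\log\det g)$) and in $\varphi$ (producing $-\LL_g\varphi$). The paper merely states these derivative formulas and concludes; your additional care about the chart-dependence of $\log\det g$ and the consistency check at $J_t\equiv J$ only fills in details the paper leaves implicit.
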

\begin{proof}
For any smooth function $f$, we define the operator
$$S_t(f):=\pd {}t\pt (f)=\frac 12dJ'_tdf,$$
where we used the equality $\pt=\frac 12dJ_td.$  Note that
$$\pd {}t\oo_{t,  \varphi}=\eta_t+ S_t(\varphi),\quad
 D_{\varphi}\oo_{t,  \varphi}(\psi)=\pt\psi.$$
The derivatives of the scalar curvature are given by \beqs \pd
{}{t}\s(\oo_{t, I, \varphi})&=&- (\eta_{i\bar j}+S_{t, i\bar
j}(\varphi)) R_{j\bar i}-g^{i\bar j}S_{t, i\bar j}(\log\det g
)-\Delta_t\tr_{\oo_{t, \varphi}}(\eta+ S_t(\varphi))\\
D_{\varphi}\s(\oo_{t, I, \varphi})(\psi)&=&-R_{i\bar j}\psi_{j\bar
i}-\Delta_t^2\psi. \eeqs Thus, the lemma follows directly.

\end{proof}

As in Section \ref{Sec001}, we define $\frak g$(resp. $\frak g'$)
the Lie algebra of $G$(resp. $G'$), and $\frak g_0$ (resp. $\frak
g_0'$) the ideal of Killing vector fields with zeros in $\frak g$
(resp. $\frak g'$). The center of $\frak g_0$(resp. $\frak g_0'$) is
denoted by $\frak z_0$(resp. $\frak z_0'$). Each element of $\frak
z_0$(resp. $\frak z_0'$) is of the form $J\Na f$
  for a  {$G$(resp. $G'$)-invariant},  real-valued function $f$.
Let $\cH_g^{\frak g_0'}$ (resp. $\cH_g^{\frak z_0'}$) the space of
holomorphic potentials of the Killing vector fields in $\frak g_0'$
(resp. $\frak z_0'$) and it is easy to see that the space
$\cH_g^{\frak z_0'}$ is identified to the $G'$-invariant holomorphic
potentials of $\cH_g^{\frak g_0'}$.  Using the $L^2$ inner product
induced by $g$, the space $W_{G'}^{2, k}(M)$ has the orthogonal
decomposition
$$W_{G'}^{2, k}(M)=\cH_g \oplus  \cH_{g, k}^{\perp}$$
where $\cH_g=\RR\oplus\cH_g^{\frak z_0'}$ and we assume $\cH_g$ is
spanned by an orthonormal basis $\{\te_0, \te_1, \cdots, \te_d\}$
where $\te_0=1$ with respect to the induced $L^2$ norm of the metric
$g$.
 Let $\td \Pi_g$
and $\td \Pi_g^{\perp}$ be the $L^2$-orthogonal projection onto
$\cH_g$ and $\cH_{g, k}^{\perp}$ respectively.  With these
notations,
we have the result:\\

\begin{theo}\label{theo015} Let $g$ be a constant scalar curvature
metric on $M$ with \beq   \ker\LL_g\cap W_{G'}^{2, k}\subset
\RR\oplus\cH_g^{\frak z_0'}.\label{eq050} \eeq For any $(J_t, g_t,
\oo_t) \in \cB_{G'}$, there is a constant $\ee_0>0$ and a smooth
function $\Psi: \cB_{G'}\ri \RR$ such that if $\Psi(J_t, g_t,
\oo_t)=0$ for some $t\in (0, \ee_0)$,
  then $M$ admits a $G'$-invariant constant scalar curvature metric
  in  $[\oo_t]$ with respect to $J_t$.\\
\end{theo}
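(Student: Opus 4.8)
The plan is to mimic the argument of Theorem~\ref{theo014} in the $G'$-invariant, varying-complex-structure setting, using the decomposition $W_{G'}^{2,k}=\cH_g\oplus\cH_{g,k}^\perp$ and the expansion of $\s(\oo_{t,\varphi})$ from Lemma~\ref{lem009}. First I would set up the analogue of equation~(\ref{eq0010}): for $(\varphi,\td\Xi)\in\cH_{g,k+4}^\perp\times\RR^{d+1}$, solve
\beq
\s(\oo_{t,\varphi})=\langle\td\Xi,\td\Te\rangle,\nonumber
\eeq
with $\oo_{t,\varphi}=\oo_t+\pt\varphi$ computed with respect to $J_t$. Applying $\td\Pi_g^\perp$ to both sides kills the right-hand side, and Lemma~\ref{lem009} shows the linearization at $(t,\varphi)=(0,0)$ is $\psi\mapsto-\LL_g\psi$ acting $\cH_{g,k+4}^\perp\to\cH_{g,k}^\perp$. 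This is exactly where hypothesis~(\ref{eq050}) enters: it guarantees $\ker\LL_g\cap W_{G'}^{2,k}\subset\RR\oplus\cH_g^{\frak z_0'}=\cH_g$, so that $\LL_g$ restricted to $\cH_{g,k+4}^\perp$ is injective, and being self-adjoint elliptic it is therefore an isomorphism onto $\cH_{g,k}^\perp$. The implicit function theorem then produces, for each small $t$ and each fixed deformation, a unique $G'$-invariant solution $\varphi_{t}\in\cH_{g,k+4}^\perp$ with $\td\Pi_g^\perp\s(\oo_{t,\varphi_t})=0$, hence a vector $\td\Xi_t=(c_0(t),c_1(t),\dots,c_d(t))\in\RR^{d+1}$ with $\s(\oo_{t,\varphi_t})=\langle\td\Xi_t,\td\Te\rangle$, together with the estimates $\|\varphi_t\|_{W^{2,k+4}}\le C\ee_0$, $\|\td\Xi_t\|\le C\ee_0$, just as in Lemma~\ref{lem2020}.

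Next, as in Lemma~\ref{lem003}, the metric $\oo_{t,\varphi_t}$ has constant scalar curvature precisely when $c_1(t)=\cdots=c_d(t)=0$. The solution determines a holomorphic vector field $X_t=\sum_{k=1}^d c_k(t)X_k\in\frak h_0(M,J_t)$ whose potentials lie in $\cH_g^{\frak z_0'}$, and I would define
\beq
\Psi(J_t,g_t,\oo_t)=\int_M\;\te_t\,(\s(\oo_{t,\varphi_t})-c_0(t))\,\oo_{t,\varphi_t}^n=\int_M\;\te_t\,\langle\Xi_t,\Te\rangle\,\oo_{t,\varphi_t}^n,\nonumber
\eeq
where $\te_t$ is the holomorphic potential of $X_t$ with respect to $\oo_{t,\varphi_t}$, normalized to have zero average. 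This is the relative Futaki invariant of $(X_t,[\oo_t])$ for the group $G'$, and it vanishes whenever the ordinary Futaki invariant of $[\oo_t]$ does, giving the last sentence of the theorem. The key comparison estimate is the analogue of~(\ref{eq003}): writing $i_{X_t}\oo_g=\i\bar\p\langle\Xi_t,\Te\rangle$ and $i_{X_t}\oo_{t,\varphi_t}=\i\bar\p\te_t$, one gets
\beqs
\i\bar\p(\te_t-\langle\Xi_t,\Te\rangle)&=&i_{X_t}\bigl(\oo_{t,\varphi_t}-\oo_g\bigr)=\sum_{k=1}^d c_k(t)\,i_{X_k}\bigl(\oo_t-\oo_g+\pt\varphi_t\bigr),
\eeqs
and since $\oo_t-\oo_g=O(t)$ and $\|\varphi_t\|_{W^{2,k+4}}\le C\ee_0$, applying $\tr_g\circ\p$ and the eigenvalue decomposition of $\Delta_g$ (with the normalization) yields $\|\te_t-\langle\Xi_t,\Te\rangle\|_{L^2(\oo_g)}\le C(t+\ee_0)\|\Xi_t\|$. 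Since $\{\te_i\}$ is $L^2(\oo_g)$-orthonormal and the volume forms $\oo_{t,\varphi_t}^n$ and $\oo_g^n$ are uniformly comparable, $\|\Xi_t\|^2\le C\int_M\langle\Xi_t,\Te\rangle^2\,\oo_{t,\varphi_t}^n$, and then $\Psi=0$ forces
\beqs
\int_M\langle\Xi_t,\Te\rangle^2\,\oo_{t,\varphi_t}^n&=&\int_M\bigl(\langle\Xi_t,\Te\rangle-\te_t\bigr)\langle\Xi_t,\Te\rangle\,\oo_{t,\varphi_t}^n\le C(t+\ee_0)\int_M\langle\Xi_t,\Te\rangle^2\,\oo_{t,\varphi_t}^n,
\eeqs
so for $\ee_0$ small we conclude $\Xi_t=0$, i.e. $c_1(t)=\cdots=c_d(t)=0$ and $\oo_{t,\varphi_t}$ has constant scalar curvature.

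I expect the main technical obstacle to be the two points where the varying complex structure genuinely complicates matters compared with Section~\ref{Sec001}: first, one must verify that the implicit function theorem can be run uniformly in the deformation parameter and that all the objects ($\oo_t$, the potentials $\te_k$, the projections, the operator $\LL_g$) depend smoothly enough on $t$ so that the extra terms $t\,\tr_g(S\log\det g)$ and $S(\varphi)$ appearing in Lemma~\ref{lem009} are genuinely $O(t)$ error terms controlled by $\|\varphi\|_{W^{2,k+4}}$ and do not spoil the invertibility of the linearization; second, one must check that $X_t$ really is a holomorphic vector field for $J_t$ with potential in $\cH_g^{\frak z_0'}$, which is where the compatibility of the $G'$-action with the deformation $(J_t,g_t,\oo_t)\in\cB_{G'}$ is used — this is exactly the structure that Rollin-Simanca-Tipler exploit, and I would invoke their framework to identify $\Psi$ with the relative Futaki invariant and to justify that its vanishing is the only obstruction. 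Everything else is a faithful transcription of Lemmas~\ref{lem2020} and~\ref{lem003}, so the novelty is entirely in bookkeeping the $G'$-equivariance and the $t$-dependence.
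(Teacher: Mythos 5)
Your proposal follows the paper's proof essentially verbatim: the same equation $\s(\oo_{t,\varphi})=\langle\td\Xi,\td\Te\rangle$ solved by the implicit function theorem (with hypothesis~(\ref{eq050}) giving invertibility of $\LL_g$ on $\cH_{g,k+4}^{\perp}$), the same Futaki-type function $\Psi$, and the same comparison estimate between $\te_t$ and $\langle\Xi_t,\Te\rangle$ leading to $\Xi_t=0$. The one imprecision is your displayed identity $\i\bar\p(\te_t-\langle\Xi_t,\Te\rangle)=i_{X_t}(\oo_{t,\varphi_t}-\oo_g)$, which glosses over the fact that $\te_t$ is defined via $\bar\p_t$ and via the $J_t$-holomorphic fields $X_k^t=J_t\xi_k+\i\xi_k$ rather than $X_k$ --- the paper's Lemma~\ref{lem012} tracks the resulting extra terms $\i(\bar\p_t-\bar\p)\te_t$ and $i_{X_k-X_k^t}$, both of size $O(\|J_t-J_0\|_{C^1})\|\Xi_t\|$ --- but since you explicitly flag exactly this as the technical point to be checked and your final bound $C(t+\ee_0)\|\Xi_t\|$ already absorbs these corrections, the argument is sound.
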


\begin{proof}First, we want to find the solution $(\varphi, \td \Xi)\in
\cH_{g, k+4}^{\perp}\times \RR^{d+1}$ of the equation \beq
\s(\oo_{t, \varphi})=\langle\td \ba, \td \Te\rangle,
\label{eq015}\eeq where $\td \Te=(\te_0, \te_1, \cdots, \te_d)$.  As
in the proof of Lemma \ref{lem2020}, we can use the implicit
function theorem and Lemma
\ref{lem009} to show that\\

\begin{lem}\label{lem010} Suppose that the condition (\ref{eq050}) holds.
For any $(J_t, g_t, \oo_t) \in \cB_{G'}$, there exist $C, \ee_0>0$
such that for all $t\in (0, \ee_0)$ there is a solution
$(\varphi_{t}, \td \Xi_{t })\in \cH_{g, k+4}^{\perp}\times
\RR^{d+1}$ which satisfies the equation (\ref{eq015}) and \beq
\|\varphi_{t}\|_{W^{2, k+4}(M)}\leq C\ee_0,\quad \|\td \ba_{t}\|\leq
C\ee_0.\label{eq042}\eeq
\end{lem}
\begin{proof}The linearization of the operator $\td \Pi_g^{\perp}\s(\oo_{t, \varphi}):
(-\ee, \ee)\times \cH_{g, k+4}^{\perp}\ri \RR$ at $(t, \varphi)=(0,
0)$ is given by
$$D_{\varphi}\td \Pi_g^{\perp}\s(\oo_{t, \varphi})|_{(0, 0)}(\psi)=-\LL_g\psi:
\cH_{g, k+4}^{\perp}\ri  W_{G'}^{2, k},$$ which is invertible from
$\cH_{k+4}^{\perp}$ to $\cH_{g, k}^{\perp}$ if and only if the
condition (\ref{eq050}) holds. Thus, the lemma follows directly from
the implicit function theorem.

\end{proof}

Let $\xi_i(1\leq i\leq d)$ be the Killing vector fields in $\frak
z_0$ with the holomorphic potentials $\theta_i(1\leq i\leq d).$
 Since $(J_t, g_t, \oo_t)\in \cB_{G'}$,
 the vector fields  $X_i^t:=J_t\xi_i+\i \xi_i$ are holomorphic on $(M, J_t)$ and the holomorphic potential of $X_i^t $ with
respect to $\oo_{t, \varphi_t}$ is given by a real-valued function
$\theta_{i}^t$ satisfying \beq i_{X_i^t}\oo_{t, \varphi_t}=\i\bar
\p_t\theta_i^t,\quad \int_M\;\te_i^t\,\oo_{t,
\varphi_t}^n=0.\label{eq302}\eeq For the vector $\td \Xi_t=(c_0(t),
c_1(t), \cdots, c_d(t))\in \RR^{d+1}$ obtained in Lemma
\ref{lem010}, we define the holomorphic vector field
$$X_t=\sum_{i=1}^d\,c_i(t)X_i^t \in \frak h_0(M, J_t).$$
 Let $\theta_t$ be the holomorphic
potential of $X_t$ with respect to $\oo_{t, \varphi_t}$ and
$$\Te=(\te_1, \cdots, \te_d),\quad \Xi_t=(c_1(t), \cdots, c_d(t)),$$
where $c_i(t)$ are the entries of $\td \Xi_t.$

\begin{lem}\label{lem012}If $(J_t, g_t, \oo_t)\in \cB_{G'}$
satisfies \beq \|J_t-J_0\|_{C^1(M)}\leq C\ee_0,\quad t\in (0,
\ee_0),\label{eq006}\eeq  then there is a constant $C_1>0$ such that
for all $t\in (0, \ee_0)$ we have \beq \|\te_{t}-\langle \ba_{t},
\Theta\rangle\|_{L^2(\oo_g)}\leq C_1\,\ee_0 \|\ba_{t}\|.
\label{eq033} \eeq

\end{lem}
\begin{proof}
Define the vector field $\hat X_{t}=\sum_{k=1}^d \,c_k(t)X_i\in
\frak h_0(M, J)$ where $c_k(t)$ is given by Lemma \ref{lem010}.  By
definition, we have
$$i_{\hat X_{t}}\oo_g=\i\bar \p \langle   \ba_{t},   \Theta\rangle,
\quad i_{X_{t}}\oo_{t, \varphi_t}=\i\bar \p_t \theta_{t},$$ where
$\bar \p$ denotes the  operator on $(M, J)$. We want to compute the
difference of the two functions $\theta_{t}$ and $\langle \ba_{t},
\Theta\rangle:$
 \beqn \i\bar \p(\langle \Xi_{t }, \Te
\rangle-\te_{t})&=&i_{\hat X_{t }}\oo_g- i_{X_t}\oo_{t,
\varphi_t}+\i(\bar \p_t-\bar
\p)\theta_{t}\nonumber\\
&=&\sum_{k=1}^d\;c_k(t)(i_{X_k}\oo_g-i_{X_k^t}\oo_{t, \varphi_t})+\i
(\bar \p_t-\bar \p)\theta_{t}. \label{eq036}\eeqn Note that the
estimate $\|\oo_g-\oo_{t, \varphi_t}\|_{W^{2, k+2}(M)}\leq C\ee_0$
obtained in Lemma \ref{lem010} implies  \beqn \|  \p
\Big(i_{X_k}\oo_g-i_{X_k^t}\oo_{t}\Big)\|_{C^0}&=&\|i_{  \p
(X_k-X_k^t)}\oo_g+i_{  \p X_k^t}(\oo_g-\oo_{t, \varphi_t})+i_{X_k^t}
\p(\oo_g-\oo_{t,
\varphi_t})\|_{C^0}\nonumber\\
&\leq &C\ee_0, \label{eq043} \eeqn where we used the estimates
$$ \|  \p (X_k^t-X_k)\|_{C^0}=\|  \p
(J_t-J_0)\xi_k\|_{C^0}\leq C\ee_0,\quad t\in (0, \ee_0).$$ Now we
estimate $\theta_{t}$. Note that we have
$$\Delta_{\oo_{t, \varphi_t}}\theta_{t}=\i\p_t(i_{X_t}\oo_{t, \varphi_t})=
\i\sum_{k=1}^d\;c_k(t)\p_t(i_{X_t}\oo_{t, \varphi_t})$$ and
$\|\oo_{t, \varphi_t}-\oo_g\|_{C^{2, \al}}\leq C\ee_0$ if we choose
$k$ sufficiently large in Lemma \ref{lem010}, there is a constant
$C>0$ independent of $t$ such that \beq \|\theta_{t}\|_{C^2(M,
\oo_g)}\leq C\|\Xi_{t}\|.\label{eq046}\eeq Therefore, we have \beqn
\Big|\p(\bar \p_t-\p)\te_{t}\Big|=\frac 12\Big|\p
(J_t-J_0)d\te_{t}\Big|\leq C \ee_0\cdot\|\Xi_{t}\|,\quad t\in (0,
\ee_0), \label{eq040}\eeqn where we used the equality $\bar
\p_tf=\frac 12(df-\i J_tdf)$  and the inequality (\ref{eq046}).
Combining the estimates (\ref{eq036})(\ref{eq043}) and
(\ref{eq040}), we have
 \beq
\Big|\Delta_g(\langle \Xi_{t}, \Te \rangle-\te_{t})\Big|\leq
C\ee_0\,\cdot\|\Xi_{t}\|.\nonumber\eeq This together the eigenvalue decomposition
and the normalization condition (\ref{eq302}) gives (\ref{eq033}).
 The lemma is proved.

\end{proof}

Now we define the function $\Psi: \cB_{G}\ri \RR$ by
$$\Psi(J_t, g_t, \oo_t)=\int_M\; X_t h_{t, \varphi_t}\,\oo_{t, \varphi_t}^n=\int_M\;
\theta_t(\s(\oo_{t, \varphi})-c_0(t))\,\oo_{t, \varphi_t}^n,$$ where
$c_0(t)$ is the average of $s(\oo_{t, \varphi_t})$ and $h_{t,
\varphi_t}$ is given by $s(\oo_{t,
\varphi_t})-c_0(t)=\Delta_{\oo_{t, \varphi_t}} h_{ {t, \varphi_t}}.$
    As in Section \ref{Sec001}, we have the following result whose
    proof is omitted.

\begin{lem}\label{lem007}There exists $\ee_0>0$ such that
if the complex deformation $(J_t, g_t, \oo_t)\in \cB_{G}$ satisfies
$\Psi(J_t, g_t, \oo_t)=0$ for some $t\in (0, \ee_0)$,  then $\oo_{t,
\varphi_{t}}$ is a constant scalar curvature metric with respect to
the complex structure $J_t.$

\end{lem}
 Theorem \ref{theo015} then follows from the above results.

\end{proof}

\section{Deformation of K\"ahler-Ricci solitons}
 Let $(M, J)$ be a compact
K\"ahler manifold with a K\"ahler Ricci soliton $g_{KS}$ with
respect to the holomorphic vector field $X:$
$$Ric(\oo_{KS})-\oo_{KS}=\pbp \theta_X$$
where $\te_X$ is the holomorphic potential of $X$ with respect to
$\oo_{KS}.$ We would like to ask whether we can perturb the K\"ahler
Ricci soliton under complex deformation of the complex structure.
Inspired by the discussion before, for any K\"ahler class $[\oo_g]
$we consider the metric  $\oo_{\varphi}\in [\oo_g]$ satisfying the
equation of extremal solitons \beq \s(\oo_{\varphi})-\un
\s=\Delta_{\varphi}\theta_X(\oo_\varphi). \label{eq101}\eeq By the
$\p\bar\p$-Lemma, we can easily check that

\begin{lem}\label{lem101}
If $\oo_g\in 2\pi c_1(M)$ satisfies the equation $(\ref{eq101})$
with respect to a holomorphic vector field $X$, then $\oo_g$ is a
K\"ahler-Ricci soliton with respect to $X.$
\end{lem}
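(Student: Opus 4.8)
The plan is to show that the extremal soliton equation (\ref{eq101}) on the anticanonical class forces the defining relation of a K\"ahler--Ricci soliton, i.e. $Ric(\oo_g)-\oo_g=\pbp\theta_X$. First I would observe that since $\oo_g\in 2\pi c_1(M)$, the form $Ric(\oo_g)-\oo_g$ represents the zero class in $H^{1,1}(M,\RR)$, so by the $\partial\bar\partial$-Lemma there is a real-valued function $h$ with $Ric(\oo_g)-\oo_g=\pbp h$, normalized by $\int_M h\,\oo_g^n=0$. Taking the trace of this identity with respect to $\oo_g$ gives $\s(\oo_g)-n=\Delta_g h$, where $\s$ is the scalar curvature. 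On the other hand, integrating the Monge--Amp\`ere-type relation (or simply using $[\oo_g]=2\pi c_1(M)$) one computes that the average scalar curvature is $\un\s=n$, since $\int_M\s(\oo_g)\,\oo_g^n = 2n\pi c_1(M)\cdot[\oo_g]^{n-1}/(n-1)! \cdot (\text{normalization}) = n V_g$. Hence $\s(\oo_g)-\un\s=\Delta_g h$.

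Next I would compare this with the hypothesis. Equation (\ref{eq101}) says $\s(\oo_g)-\un\s=\Delta_g\theta_X$, where $\theta_X$ is the holomorphic potential of $X$ with respect to $\oo_g$, also normalized to have zero average. Subtracting, $\Delta_g(h-\theta_X)=0$ on the compact manifold $M$, so $h-\theta_X$ is constant; since both have zero average, $h=\theta_X$. Substituting back into the $\partial\bar\partial$-Lemma identity yields $Ric(\oo_g)-\oo_g=\pbp\theta_X$, which is precisely the K\"ahler--Ricci soliton equation with respect to $X$. The only point requiring a small check is that $X$ genuinely is a holomorphic vector field with $\theta_X$ as its holomorphic potential in the sense used elsewhere in the paper (namely $i_X\oo_g=\i\bar\partial\theta_X$); but this is part of the hypothesis of (\ref{eq101}), where $\theta_X(\oo_\varphi)$ denotes exactly that potential.

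The argument is essentially a soft Hodge-theoretic one and I do not expect a genuine obstacle; the only mild care needed is bookkeeping with the normalization constants (the zero-average conventions for $h$ and $\theta_X$, and the identity $\un\s=n$ on the anticanonical class). Once those are fixed, the vanishing of $\Delta_g(h-\theta_X)$ together with the maximum principle on a compact manifold closes the proof immediately.
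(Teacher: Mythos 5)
Your argument is correct and is exactly the route the paper intends: it states the lemma with only the remark ``by the $\partial\bar\partial$-Lemma, we can easily check,'' and your Ricci-potential comparison $Ric(\oo_g)-\oo_g=\pbp h$, trace to get $\s-\un\s=\Delta_g h$, and identification $h=\theta_X$ via harmonicity and the zero-average normalizations is precisely that easy check. No gaps.
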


  By the equation (\ref{eq101}), if
$[\oo_g]$ admits an extremal soliton $\oo_{\varphi}$ and the Futaki
invariant vanishes on $[\oo_g]$, then $\oo_{\varphi}$ must be a
constant scalar curvature metric. In fact,
$$f(X, [\oo_0])=\int_M\;\theta_X(\varphi)\Delta_{\varphi}\theta_X(\varphi)\,\oo_{\varphi}^n=0$$
implies that $\theta_X(\varphi)$ is a constant. \\

\begin{theo}\label{theo101}If $\oo_g$ be a K\"ahler Ricci soliton with respect to $X$
on $M,$ then for any $\bb\in \cH^{1, 1}(M)$
there is an extremal soliton in the K\"ahler class $[\oo_0+t\bb]$
for small $t$.

\end{theo}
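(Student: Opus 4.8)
\textbf{Proof proposal for Theorem \ref{theo101}.} The plan is to mimic the structure of the proof of Theorem \ref{theo014}, replacing the constant scalar curvature equation by the extremal soliton equation (\ref{eq101}), and using the fact that a K\"ahler--Ricci soliton is a critical metric for this equation in a sense that makes its linearization well understood. Fix $\bb\in\cH^{1,1}(M)$ and consider, for $\varphi$ in a small neighborhood $\cU$ of the origin in $W^{2,k}_G(M)$ (where $G$ is the identity component of the isometry group of $(M,g)$, which leaves $X$ invariant), the metric $\oo_{t,\varphi}=\oo_g+t\bb+\pbp\varphi$. Since the vector field $X$ is holomorphic it persists under the K\"ahler class deformation (the complex structure is fixed here), and it has a holomorphic potential $\theta_X(\oo_{t,\varphi})$ with respect to each such metric. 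The natural equation to solve is
\beq
\s(\oo_{t,\varphi})-\Delta_{\oo_{t,\varphi}}\theta_X(\oo_{t,\varphi})=\langle\td\Xi,\td\Te\rangle,\label{eq-propA}
\eeq
where $\td\Te=(\te_0,\dots,\te_d)$ is the orthonormal basis of $\cH_g=\RR\oplus\cH_g^{\frak z_0}$ as before, and $\td\Xi\in\RR^{d+1}$. The point of subtracting $\Delta\theta_X$ is that at a K\"ahler--Ricci soliton the linearization of the operator $\varphi\mapsto \s(\oo_\varphi)-\Delta_\varphi\theta_X(\oo_\varphi)$, projected onto $\cH_{g,k}^\perp$, is (up to sign) the modified Lichnerowicz-type operator $\LL_{g,X}$, which by the work of Tian--Zhu on the uniqueness of K\"ahler--Ricci solitons is an isomorphism $\cH_{g,k+4}^\perp\to\cH_{g,k}^\perp$; its kernel consists precisely of the holomorphic potentials, i.e. exactly $\cH_g$ modulo the extra factor. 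So the implicit function theorem gives, exactly as in Lemma \ref{lem2020}, a unique solution $(\varphi_{t,\bb},\td\Xi_{t,\bb})\in\cH_{g,k+4}^\perp\times\RR^{d+1}$ of (\ref{eq-propA}) with $\|\varphi_{t,\bb}\|_{W^{2,k+4}}\le C\ee_0$ and $\|\td\Xi_{t,\bb}\|\le C\ee_0$ for $t\in(0,\ee_0)$.

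The second step is to show that this solution is actually an extremal soliton, i.e. that the "error vector" $\Xi_{t,\bb}=(c_1,\dots,c_d)$ (the components of $\td\Xi_{t,\bb}$ other than the average $c_0$) vanishes. Here the key difference from the cscK case is that, unlike in Lemma \ref{lem003}, one does \emph{not} need to impose a Futaki-type vanishing condition, because the extra freedom in the vector field $X_t := X + \sum_{k=1}^d c_k(t) X_k$ can absorb the discrepancy. Concretely, the function $\langle\td\Xi_{t,\bb},\td\Te\rangle = c_0(t)+\sum c_k(t)\theta_k$ is itself (approximately) the potential $\Delta_{\oo_{t,\bb}}$-image of the combined vector field, so the equation (\ref{eq-propA}) already has the shape $\s(\oo_{t,\varphi})-\un\s = \Delta_{\oo_{t,\varphi}}\theta_{X_t}(\oo_{t,\varphi})$ up to terms of size $O(t\|\Xi_{t,\bb}\|)$ controlled exactly as in the estimate (\ref{eq003}) of Lemma \ref{lem003}. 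Thus for $t$ small the solution of (\ref{eq-propA}) is automatically a solution of (\ref{eq101}) for the vector field $X_t$, and $X_t$ is holomorphic with nonempty zero set; hence $\oo_{t,\bb}$ is an extremal soliton in $[\oo_g+t\bb]$. In other words, the analogue of the "obstruction function" $\Phi$ is here identically zero, because the target space of possible vector fields already contains all of $\frak z_0$.

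I would expect the main technical obstacle to be the linearization analysis of step one: one must verify that the operator $\varphi\mapsto \s(\oo_\varphi)-\Delta_\varphi\theta_X(\oo_\varphi)$ linearizes at $\oo_g$ (a soliton) to the modified operator $-\LL_{g,X}$, which requires carefully differentiating $\theta_X(\oo_\varphi)$ in $\varphi$ (the potential is defined only up to the normalization $\int\theta_X\,\oo_\varphi^n=0$, and the volume form moves), and then invoking the soliton-uniqueness result that $\LL_{g,X}$ has kernel exactly the holomorphic potentials. A secondary delicate point, flagged by the author's own remark that "there is a technical difficulty" for general extremal solitons, is that this clean picture relies on $\oo_g$ being a genuine K\"ahler--Ricci soliton rather than merely an extremal soliton: only then is the relevant operator self-adjoint with respect to the weighted measure $e^{\theta_X}\oo_g^n$ and its spectral theory available off the shelf. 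Once step one is in place, step two is a routine repetition of the estimates (\ref{eq003})--(\ref{eq045}) with $\Delta\theta_X$ terms carried along, and the theorem follows.
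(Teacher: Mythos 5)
Your overall strategy diverges from the paper's: the paper does not run the Pacard--Xu scheme for Theorem \ref{theo101} at all. Instead it follows Lebrun--Simanca directly, solving the single equation $\Pi_g^{\perp}\Pi_{\varphi}^{\perp}G_{\varphi}(\s(\oo_{t,\varphi})-\un\s(t))=0$, where the inner projection $\Pi_{\varphi}^{\perp}$ is taken onto the complement of the holomorphic potentials of the \emph{deformed} metric; vanishing of this quantity is, by construction, equivalent to $\oo_{t,\varphi}$ being an extremal soliton for \emph{some} holomorphic field, so no obstruction function ever appears. The linearization at the soliton reduces, after applying $G_g$ and the soliton equation, to $-\Pi_g^{\perp}\bigl(\Delta_g\psi+\psi+X(\psi)\bigr)$, which is invertible on $\cH_{g,k+2}^{\perp}$ by Tian--Zhu's Lemma 2.2, and the implicit function theorem finishes. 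Your step one (linearizing $\varphi\mapsto\s(\oo_\varphi)-\Delta_\varphi\te_X(\oo_\varphi)$ and invoking Tian--Zhu for the kernel) is consistent with this, provided the inner product used to form $\cH_g$ and its complement is the weighted one $\int fg\,e^{\te_X}\oo_g^n$, as the paper uses throughout this section.

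The genuine gap is in your step two. Having solved $\s(\oo_{t,\varphi})-\Delta_{\oo_{t,\varphi}}\te_X(\oo_{t,\varphi})=c_0+\sum_k c_k\te_k$, you claim the right-hand side is, up to errors of size $O(t\|\Xi_{t,\bb}\|)$, of the form $\un\s+\Delta_{\oo_{t,\varphi}}\te_{Y}(\oo_{t,\varphi})$ with $Y=\sum_k c_kX_k$, so that the discrepancy is absorbed into the vector field. This fails already at $t=0$: by Tian--Zhu the potential of $X_k$ on the soliton satisfies $\Delta_g\te_k=-\te_k-X(\te_k)$, so $\te_k-\Delta_g\te_k=2\te_k+X(\te_k)$, which is not zero for nonconstant $\te_k$; hence the discrepancy $\sum_kc_k\bigl(\te_k-\Delta_{\oo_{t,\varphi}}\td\te_k^{\,t,\varphi}\bigr)$ is of order $\|\Xi_{t,\bb}\|$, not $O(t\|\Xi_{t,\bb}\|)$. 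Consequently you can conclude neither that $\Xi_{t,\bb}=0$ by a Lemma \ref{lem003}-type bootstrap nor that the solution satisfies (\ref{eq101}) for $Y$. The repair is either to replace the fixed functions $\te_k$ on the right-hand side of your auxiliary equation by the $\varphi$-dependent functions $\Delta_{\oo_{t,\varphi}}\td\te_k^{\,t,\varphi}$ (after which a solution is tautologically an extremal soliton for $X+\sum_kc_kX_k$), or to adopt the paper's double-projection equation, which builds the varying vector field into the operator from the start.
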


\begin{proof} We follow Lebrun-Simanca's arguments in
\cite{[LB1]}\cite{[LB2]}.  Let $g$ be a K\"ahler-Ricci soliton. By
Theorem A in the appendix of \cite{[TZ1]} the identity component $G$
of the isometry group of $(M, g)$ is a maximal compact subgroup of
the automorphism group $\Aut(M).$ As in previous sections, we let
$W^{2, k}_G$ be the real $k$-th Sobolev space of $G$-invariant
real-valued functions in $W^{2, k}.$ Let $\frak g$ the Lie algebra
of $G$ and $\frak z\subset\frak g$ denote the center of $\frak g.$
We denote by $\frak g_0$ the ideal of Killing vector fields with
zeros and $\frak z_0=\frak z\cap \frak g_0$.  By Lemma A.2 in the
appendix of \cite{[TZ1]}, each element of $\frak z_0$ is of the form
$J\Na f$, where $f$ is a $G$-invariant real-valued function
satisfying the equation
$$\cL_{g}(f)=f_{\bar i\bar j}dz^{\bar i}\otimes dz^{\bar j}=0.$$
We choose a basis $\{\xi_1, \cdots, \xi_d\}$
 of $\frak z_0$ such that the functionals $\{\te_0, \te_1, \cdots,
 \te_d\}$ where $\te_0=1$ and $\te_i(1\leq i\leq d)$ is the holomorphic potential
 of the holomorphic vector fields   $X_i=J\xi_i+\i\xi_i $  are orthonormal
with respect to the $L^2$ inner product
$$\langle f, g\rangle_{L^2(\oo_g)}=\frac 1{V_g}\int_M\;fge^{\theta_X}\,\oo_g^n,\quad f, g\in C^{\infty}(M, \RR),$$
where $V_g$ is the volume of $(M, g).$ Using this product, the space
$W^{2, k}_G$ has a decomposition $W^{2, k}_G=\cH_g\oplus \cH_{g,
k}^{\perp},$ where $\cH_g$ is spanned by the set $\{\theta_0, \te_1,
\cdots, \te_d\}$ over $\RR.$  We define the associate project
operator $\Pi_g$ and $\Pi_g^{\perp},$ and we can assume that $X_1=X$
which defines the K\"ahler-Ricci soliton $\oo_g.$\\

Now we consider the equation for $\varphi\in \cU:$
$$S(t, \varphi):=\Pi_g^{\perp}\Pi_{\varphi}^{\perp}G_{\varphi}(\s(\oo_{t, \varphi})-\un \s(t))=0,$$
where $G_{\varphi}$ is the Green operator with respect to the metric
$\oo_{t, \varphi}$. If $\cU$ is small enough, $  S(t, \varphi)=0$ if
and only if $\oo_{t, \varphi}$ is an extremal soliton. We calculate
the variation of $S(t, \varphi)$ at $(t, \varphi)=(0, 0):$ \beqn
D_{\varphi} S(t, \varphi)|_{(0,
0)}(\psi)&=&-\Pi_g^{\perp}(D_{\varphi}\Pi_{\varphi})|_{(0,
0)}G_g(\s(\oo_g)-\un
\s)+\Pi_g^{\perp}D_{\varphi}(G_{\varphi}(\s(\oo_{t, \varphi})-\un
\s))|_{(0, 0)}.\nonumber\\\label{eq102} \eeqn  Since $g$ is a
K\"ahler Ricci soliton, we have $G_g(\s(\oo_g)-\un \s)=\theta_X$.
Note that
$$\Pi_{\varphi} \te_X=\sum_{i=0}^d\,\langle \theta_{i, \varphi},
\theta_X\rangle_{L^2(\oo_{t, \varphi})} \te_{i, \varphi},$$ where
$\te_{i, \varphi}$ is an orthonormal basis of $\cH_g.$ Now we choose
the functions
$$\te_{0, \varphi}=1, \quad \theta_{i, \varphi}=\frac {\td \te_{i,
\varphi}}{\|\td \te_{i, \varphi}\|_{L^2(\oo_{\varphi})}},\quad 1\leq
i\leq d,$$ where $\td \te_{i, \varphi}$ are defined by the
equalities $ i_{X_i}\oo_{t, \varphi}=\i\bar\p \td \te_{i, \varphi}$
such that $\{\theta_{0, \varphi}, \cdots, \te_{d, \varphi}\}$ forms
an orthonormal basis of $\cH_{\varphi}, $ which is the space defined
similar to $\cH_g$ using the metric $\oo_{\varphi}.$
 Thus,
we have
 \beqs
-\Pi_g^{\perp}(D_{\varphi}\Pi_{\varphi})|_{(0, 0)}G_g(\s(\oo_g)-\un
\s)&=&-\Pi_g^{\perp}(D_{\varphi}\Pi_{\varphi})|_{(0, 0)}\theta_X\\&
=&-\langle \frac {\te_X}{\|\te_X\|_{L^2}}, \te_X
\rangle_{L^2(\oo_g)}\Pi_g^{\perp}\frac
1{\|\te_X\|_{L^2}}D_{\varphi}\td \theta_{1, \varphi}|_{(0, 0)}\\
&=&-\Pi_g^{\perp}D_{\varphi}\td \theta_{1, \varphi}|_{(0, 0)}. \eeqs
By the definition of $\td \te_{1, \varphi}$, we have
$$i_{X}D_{\varphi}\oo_{t, \varphi}|_{(0, 0)}=\i\bar \p D_{\varphi}\td \te_{1,
\varphi},$$ which implies that $X(\psi)=D_{\varphi}\td \te_{1,
\varphi}|_{(0, 0)}(\psi).$ Combining the above equalities, we have
\beq -\Pi_g^{\perp}(D_{\varphi}\Pi_{\varphi})|_{(0,
0)}G_g(\s(\oo_g)-\un \s)=-\Pi_g^{\perp}X(\psi). \label{eq103} \eeq

Now we calculate the second term of the right hand side of
(\ref{eq102}). Let $A_{\varphi}=G_{\varphi}(\s(\oo_{t, \varphi})-\un
\s)$, we have
$$\Delta_{\varphi}A_{\varphi}=\s(\oo_{t, \varphi})-\un
\s.$$ Differentiating this equation with respect to $\varphi$ at
$(t, \varphi)=(0, 0)$, we have
$$-\psi_{i\bar j}\te_{X, j\bar i}+\Delta_gD_{\varphi}A_{\varphi}|_{(0, 0)}=
-\Delta_g^2\psi-R_{i\bar j}\psi_{j\bar i}.$$ Combining this with
(\ref{eq102}) we have \beqn
\Pi_g^{\perp}D_{\varphi}(G_{\varphi}(\s(\oo_{t, \varphi})-\un
\s))|_{(0, 0)}&=&-\Pi_g^{\perp}G_g\Big(\Delta_g^2\psi+R_{i\bar
j}\psi_{j\bar i}-\psi_{i\bar j}\te_{X, j\bar i}\Big).\label{eq104}
\eeqn Combining the equalities (\ref{eq102})-(\ref{eq104}), we have
\beqs D_{\varphi} S(t, \varphi)|_{(0,
0)}(\psi)&=&-\Pi_g^{\perp}\Big(G_g (\Delta_g^2\psi+R_{i\bar
j}\psi_{j\bar i}-\psi_{i\bar j}\te_{X, j\bar i})+X(\psi)\Big)\\
&=&-\Pi_g^{\perp} \Big(\Delta_g\psi+\psi+X(\psi)\Big) \eeqs where we
used the assumption that $g$ is a K\"ahler-Ricci soliton. Note that
by Lemma 2.2 in \cite{[TZ1]}  the function $\psi$ satisfies
$\Delta_g\psi+\psi+X(\psi)=0$ if and only if $\Pi_g^{\perp}\psi=0$
Thus, the operator
$$D_{\varphi} S(t, \varphi)|_{(0,
0)}: \cH_{g, k+2}^{\perp}\ri \cH_{g, k}^{\perp}$$ is invertible and
by the implicit function theorem there is a solution $\varphi_t\in
\cH_{g, k+2}^{\perp}$ satisfies the equation $ S(t, \varphi_t)=0$ when $t$ is small. The
theorem is proved. \\

\end{proof}

\begin{rem}
It is interesting to ask whether Theorem \ref{theo101} holds for any
extremal soliton $g$. To prove this, it suffices to show that any
function $\psi$ with
$$\psi_{ij\bar j\bar i}+\theta_{X, \bar i}\psi_{ik\bar k}=0$$ must
satisfy the equation $\psi_{\bar i\bar j}=0.$
\end{rem}
In fact, if $g$ is an extremal soliton, we have
 \beqs
  G_g (\Delta_g^2\psi+R_{i\bar
j}\psi_{j\bar i}-\psi_{i\bar j}\te_{X, j\bar
i})+X(\psi)&=&G_g(\Delta_g^2\psi+R_{i\bar j}\psi_{j\bar
i}-\theta_{i\bar j}\psi_{j\bar i}+\Delta_g(X\psi))\\
&=&G_g(\Delta_g^2\psi+R_{i\bar j}\psi_{j\bar i}+\s_{, \bar
i}\psi_{i}+\te_{X, \bar i}\psi_{ik\bar k})\\
&=&G_g(\psi_{ij\bar j\bar i}+\te_{X, \bar i}\psi_{ik\bar k}),\eeqs
where we used  the equality \beqs \Delta_g(X\psi)&=&\frac
12\Big((\te_{\bar i}\psi_i)_{j\bar j}+(\te_{\bar i}\psi_i)_{\bar
jj}\Big) =\theta_{\bar ij}\psi_{i\bar j}+\theta_{\bar
i}(\Delta\psi)_i\\
&=&\theta_{\bar ij}\psi_{i\bar j}+\te_{\bar i}(\psi_{ik\bar
k}-R_{i\bar j}\psi_{j})\\&=&\theta_{\bar ij}\psi_{i\bar j}+\te_{\bar
i}\psi_{ik\bar k}+\s_{, \bar i}\psi_{i}.
 \eeqs Here we used the extremal soliton equation in the last
 equality. \\

 Next, we use the similar method in Section \ref{Sec001} to consider
 the case when the complex structure varies   . Let $(g, \oo_g)$ is a
K\"ahler-Ricci soliton on $(M, J)$ and $(J_t, g_t, \oo_t)$  a
complex deformation of $(J, g, \oo_g)$. We assume $(J_t, g_t,
\oo_t)\in \cB_G$ where $G$ is the identity component of the isometry
group of $(M, g)$ and $\cB_G$ denotes all the $G$ invariant complex
deformation of $(J, g, \oo_g).$ With these notations, we have the
 result:

 \begin{theo}\label{theo102}  Let $(M, J, g,  \oo_{g})$ be a
 compact K\"ahler manifold with
 a K\"ahler-Ricci soliton
$(g, \oo_g)$.  For any  $(J_t, g_t, \oo_t) \in \cB_{G}$,  $M$ admits
a $G$-invariant extremal soliton in  $[\oo_t]$ with respect to $J_t$
for small $t.$
\end{theo}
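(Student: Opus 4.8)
The plan is to combine the strategy of Theorem \ref{theo101} (fixed complex structure, K\"ahler-Ricci soliton case) with the $G$-invariance bookkeeping used in the proof of Theorem \ref{theo015} for varying complex structures. Concretely, given $(J_t, g_t, \oo_t)\in\cB_G$, set $\oo_{t,\varphi}=\oo_t+\pt\varphi$ for $\varphi$ in a small neighborhood $\cU$ of the origin in $W^{2,k}_G(M)$, and consider the map
\[
S(t,\varphi):=\Pi_g^{\perp}\,\Pi_{t,\varphi}^{\perp}\,G_{t,\varphi}\bigl(\s(\oo_{t,\varphi})-\un\s(t)\bigr),
\]
where $G_{t,\varphi}$ is the Green operator of $\oo_{t,\varphi}$, $\Pi_{t,\varphi}$ is the $L^2(\oo_{t,\varphi})$-projection onto the space $\cH_{t,\varphi}$ of normalized holomorphic potentials of the vector fields $X_i^t=J_t\xi_i+\i\xi_i$ (which are holomorphic on $(M,J_t)$ precisely because $(J_t,g_t,\oo_t)\in\cB_G$), and $\Pi_g^{\perp}=I-\td\Pi_g$ on $W^{2,k}_G(M)$. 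As in the fixed-complex-structure case, for $\cU$ small, $S(t,\varphi)=0$ is equivalent to $\oo_{t,\varphi}$ being a $G$-invariant extremal soliton with respect to $J_t$, because the equation $\Pi^{\perp}G(\s-\un\s)=0$ forces $\s-\un\s$ to lie in the span of the holomorphic potentials, i.e. $\s(\oo_{t,\varphi})-\un\s=\Delta_{t,\varphi}\theta_{X_t}$ for the soliton vector field $X_t=\sum c_i(t)X_i^t$.

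The core step is to show the linearized operator $D_\varphi S(0,0)\colon \cH^{\perp}_{g,k+2}\to\cH^{\perp}_{g,k}$ is an isomorphism, after which the implicit function theorem produces $\varphi_t\in\cH^{\perp}_{g,k+2}$ with $S(t,\varphi_t)=0$ for small $t$. At $t=0$ the complex structure is $J$ and $\oo_0=\oo_g$ is the given soliton, so the computation is identical to the one carried out in the proof of Theorem \ref{theo101}: using $G_g(\s(\oo_g)-\un\s)=\theta_X$ and differentiating the projection term and the Green-operator term, one finds
\[
D_\varphi S(0,0)(\psi)=-\Pi_g^{\perp}\bigl(\Delta_g\psi+\psi+X(\psi)\bigr),
\]
and by Lemma 2.2 of \cite{[TZ1]} the kernel of $\psi\mapsto\Delta_g\psi+\psi+X(\psi)$ on $W^{2,k}_G$ is exactly $\cH_g$, so this operator is invertible on the orthogonal complement. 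The only new ingredient relative to Theorem \ref{theo101} is verifying that everything is compatible with the $G$-action as $t$ varies: since $G\subset\Aut(M,J_t)$ acts holomorphically on the deformation, $\oo_t$, $\eta_t=\p\oo_t/\p t$, and hence the constructed $\varphi_t$ are $G$-invariant, and the operators $\Pi_{t,\varphi}$, $G_{t,\varphi}$ preserve $W^{2,k}_G$; the term involving $\eta_t$ and $S_t(\varphi)$ from Lemma \ref{lem009} only enters the $t$-derivative, not the $\varphi$-linearization at $(0,0)$, so it does not affect invertibility.

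I expect the main obstacle to be a technical one rather than a conceptual one: one must check that the family $t\mapsto S(t,\varphi)$ is smooth (or at least $C^1$) near $(0,0)$, which requires the holomorphic potentials $\theta_i^t$ of $X_i^t$ with respect to $\oo_{t,\varphi_t}$ to depend smoothly on $t$ and $\varphi$ — this is where the hypothesis $(J_t,g_t,\oo_t)\in\cB_G$ is essential, guaranteeing the $X_i^t$ are genuinely holomorphic on $(M,J_t)$ so their potentials exist and vary smoothly, analogous to the estimates \eqref{eq043}, \eqref{eq046} in Lemma \ref{lem012}. Once smoothness of the setup is in hand, the implicit function theorem applies verbatim and yields the $G$-invariant extremal soliton in $[\oo_t]$ with respect to $J_t$ for small $t$, completing the proof. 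The final assertion noted after Theorem \ref{theo003} — that if in addition $[\oo_t]=2\pi c_1(M,J_t)$ then this extremal soliton is an actual K\"ahler-Ricci soliton — then follows immediately from Lemma \ref{lem101}.
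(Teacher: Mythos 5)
Your proposal is correct and follows essentially the same route as the paper: the same functional $S(t,\varphi)=\Pi_g^{\perp}\Pi_{\varphi}^{\perp}G_{\varphi}(\s(\oo_{t,\varphi})-\un\s(t))$, the same observation that the $\varphi$-linearization at $(0,0)$ reduces to the fixed-complex-structure computation of Theorem \ref{theo101} (using $D_{\varphi}\oo_{t,\varphi}|_{(0,0)}(\psi)=\pbp\psi$ and $D_{\varphi}X^t|_{(0,0)}=0$), yielding $-\Pi_g^{\perp}(\Delta_g\psi+\psi+X(\psi))$, whose invertibility via Lemma 2.2 of \cite{[TZ1]} feeds the implicit function theorem. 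Your additional remarks on smoothness in $t$ and $G$-invariance are sensible elaborations of points the paper leaves implicit in its sketch.
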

\begin{proof} The proof is more or less the same as in Theorem
\ref{theo101}, and we only sketch it here. For any $(J_t, g_t,
\oo_t)\in \cB_{G}$, we consider the equation \beq S(t,
\varphi):=\Pi_g^{\perp}\Pi_{\varphi}^{\perp}G_{\varphi}(\s(\oo_{t,
\varphi})-\un \s(t))=0,\label{eq105}\eeq where $G_{\varphi}$ and
$\Pi_{\varphi}^{\perp}$ are the operators with respect to the metric
$\oo_{t, \varphi}=\oo_t+\pt \varphi$.
   Let $\{\xi_1, \cdots, \xi_d\}$ be a basis of
$\frak z_0$. Since $(J_t, g_t, \oo_t) \in \cB_{G}$, the vector
fields $\{X_1^t, \cdots, X_d^t\}$ where $X_i^t=J_t\xi_i+\i \xi_i$
are holomorphic vector fields on $(M, J_t)$ and form a basis of
$\frak h_0(M, J_t).$ Let $\td\theta_i^t(1\leq i\leq d)$ be the
holomorphic potentials of $X_i^t$ with respect to $\oo_{t, \varphi}$
and we assume that the set $\{\td\theta_0^t, \td \te_1^t, \cdots,
\td\te_d^t\}$ where $\td\te_0^t=1$ are orthonormal and spans the
space $\cH_{\varphi}.$ Differentiating the equation (\ref{eq105})
with respect to $\varphi$, we have \beqn D_{\varphi}S(t,
\varphi)|_{(0,
0)}(\psi)&=&-\Pi_g^{\perp}(D_{\varphi}\Pi_{\varphi})|_{(0,
0)}G_g(\s(\oo_g)-\un
\s)+\Pi_g^{\perp}D_{\varphi}(G_{\varphi}(\s(\oo_{t, \varphi})-\un
\s))|_{(0, 0)}.\nonumber\eeqn  Since $D_{\varphi}\oo_{t,
\varphi}|_{(0, 0)}(\psi)=\pbp\psi$ and $D_{\varphi}X^t|_{(0, 0)}=0,$
we still get the equality (\ref{eq103}). By the same calculation as
in Theorem \ref{theo101}, we have the operator \beqs D_{\varphi}S(t,
\varphi)|_{(0, 0)}(\psi) =-\Pi_g^{\perp}
\Big(\Delta_g\psi+\psi+X(\psi)\Big) \eeqs which is invertible from
$\cH_{g, k+2}^{\perp}$ to $\cH_{g, k}^{\perp}$. The theorem is
proved.

\end{proof}

Here we give an easy example on the existence of extremal solitons.

\begin{ex}Let $\pi: \hat M\ri M$ be the blowup of $M=\CC\PP^2$ at a point $p$. Then $\hat M$
has no K\"ahler-Einstein metrics but admits a K\"ahler-Ricci soliton
in $2\pi c_1(\hat M).$ Thus, $\hat M$ admits extremal solitons in
the K\"ahler class $2\pi c_1(\hat M)-t[E]$ for $t\in (0, \ee)$ where
$E=\pi^{-1}(p)$ is the exceptional divisor and $\ee>0$ is small.

\end{ex}

\noindent Haozhao Li,\\
 Department of Mathematics,  University of Science and Technology of China, Hefei, 230026, Anhui
province, China.
Email: hzli@ustc.edu.cn\\

\end{document}